\date{\today}
\title[HJB]{On the stability of Lipschitz continuous control problems and its application to reinforcement learning}
\author[Cho]{Namkyeong Cho}
\address{Center for Mathematical Machine Learning and its Applications, POSTECH, 77 Cheongam-Ro, Nam-Gu, Pohang, Gyeongbuk, Korea}
 \email{namkyeong.cho@gmail.com}
\author[Kim]{Yeoneung Kim}
\address{Department of Applied Artificial Intelligence,
SeoulTech, 232 Gongneung-ro, Nowon-gu, Seoul, Korea}
 \email{kimyeoneung@gmail.com}
\newtheorem{theorem}{Theorem}[section]
\newtheorem{proposition}[theorem]{Proposition}
\newtheorem{lemma}[theorem]{Lemma}
\newtheorem{cor}[theorem]{Corollary}
\theoremstyle{definition}
\newtheorem{definition}[theorem]{Definition}
\newtheorem{remark}[theorem]{Remark}
\newtheorem{assumption}[theorem]{Assumption}
\numberwithin{equation}{section}
\def\eqn#1$$#2$${\begin{equation}\label#1#2\end{equation}}
\def\charfn_#1{{\raise1.2pt\hbox{$\chi_{\kern-1pt\lower3pt\hbox{{$\scriptstyle#1$}}}$}}}
\newcommand{\pushright}[1]{\ifmeasuring@#1\else\omit\hfill$\displaystyle#1$\fi\ignorespaces}
\newcommand{\pushleft}[1]{\ifmeasuring@#1\else\omit$\displaystyle#1$\hfill\fi\ignorespaces}
\def\mean#1{\mathchoice%
          {\mathop{\kern 0.2em\vrule width 0.6em height 0.69678ex depth -0.58065ex
                  \kern -0.8em \intop}\nolimits_{\kern -0.4em#1}}%
          {\mathop{\kern 0.1em\vrule width 0.5em height 0.69678ex depth -0.60387ex
                  \kern -0.6em \intop}\nolimits_{#1}}%
          {\mathop{\kern 0.1em\vrule width 0.5em height 0.69678ex
              depth -0.60387ex
                  \kern -0.6em \intop}\nolimits_{#1}}%
          {\mathop{\kern 0.1em\vrule width 0.5em height 0.69678ex depth -0.60387ex
                  \kern -0.6em \intop}\nolimits_{#1}}}
\def\vintslides_#1{\mathchoice%
          {\mathop{\kern 0.1em\vrule width 0.5em height 0.697ex depth -0.581ex
                  \kern -0.6em \intop}\nolimits_{\kern -0.4em#1}}%
          {\mathop{\kern 0.1em\vrule width 0.3em height 0.697ex depth -0.604ex
                  \kern -0.4em \intop}\nolimits_{#1}}%
          {\mathop{\kern 0.1em\vrule width 0.3em height 0.697ex depth -0.604ex
                  \kern -0.4em \intop}\nolimits_{#1}}%
          {\mathop{\kern 0.1em\vrule width 0.3em height 0.697ex depth -0.604ex
                  \kern -0.4em \intop}\nolimits_{#1}}}
\newcommand{\aveint}[2]{\mathchoice%
          {\mathop{\kern 0.2em\vrule width 0.6em height 0.69678ex depth -0.58065ex
                  \kern -0.8em \intop}\nolimits_{\kern -0.45em#1}^{#2}}%
          {\mathop{\kern 0.1em\vrule width 0.5em height 0.69678ex depth -0.60387ex
                  \kern -0.6em \intop}\nolimits_{#1}^{#2}}%
          {\mathop{\kern 0.1em\vrule width 0.5em height 0.69678ex depth -0.60387ex
                  \kern -0.6em \intop}\nolimits_{#1}^{#2}}%
          {\mathop{\kern 0.1em\vrule width 0.5em height 0.69678ex depth -0.60387ex
                  \kern -0.6em \intop}\nolimits_{#1}^{#2}}}
\def\subsubsection{\@startsection{subsubsection}{3}%
  \z@{.5\linespacing\@plus.7\linespacing}{.1\linespacing}%
  {\normalfont\itshape}}
\newtoks\by
\newtoks\paper
\newtoks\book	
\newtoks\jour
\newtoks\yr
\newtoks\pages
\newtoks\vol
\newtoks\publ
\def\ota{{\hbox{\bf ???}}}
\def\cLear{\by=\ota\paper=\ota\book=\ota\jour=\ota\yr=\ota
\pages=\ota\vol=\ota\publ=\ota}
\def\endpaper{\the\by, \textit{\the\paper},
{\the\jour} \textbf{\the\vol} (\the\yr), \the\pages.\cLear}
\def\endbook{\the\by, \textit{\the\book},
\the\publ, \the\yr.\cLear}
\def\endpap{\the\by, \textit{\the\paper}, \the\jour.\cLear}
\def\endproc{\the\by, \textit{\the\paper}, \the\book, \the\publ,
\the\yr, \the\pages.\cLear}
\begin{document}
\maketitle
\begin{abstract} 

We address the crucial yet underexplored stability properties of the Hamilton--Jacobi--Bellman (HJB) equation in model-free reinforcement learning contexts, specifically for Lipschitz continuous optimal control problems. We bridge the gap between Lipschitz continuous optimal control problems and classical optimal control problems in the viscosity solutions framework, offering new insights into the stability of the value function of Lipschitz continuous optimal control problems. By introducing structural assumptions on the dynamics and reward functions, we further study the rate of convergence of value functions. Moreover, we introduce a generalized framework for Lipschitz continuous control problems that incorporates the original problem and leverage it to propose a new HJB-based reinforcement learning algorithm. The stability properties and performance of the proposed method are tested with well-known benchmark examples in comparison with existing approaches.

\end{abstract}

\section{Introduction}
Reinforcement learning (RL) is known to be an effective approach for sequential decision-making problems or optimal control problems, particularly for discrete-in-time settings~\cite{watkins1992q,mnih2015human}. One of the well-known data-driven approaches to tackle such problems is $Q$-learning~\cite{watkins1992q}, which is built upon Bellman's dynamic programming principle~\cite{bellman1966dynamic}. However, the classical $Q$-learning and its variants such as the Deep-$Q$-learning algorithm are often limited to discrete-in-time problems. For the extension of RL algorithm to the continuous-in-time problem, various methods have been proposed such as~\cite{jia2023q,kim2021hamilton,yildiz2021continuous}. In~\cite{jia2023q}, authors provide a rigorous justification of $Q$-function for continuous stochastic optimal control problem under the presence of entropy-regularizer in the reward function, which is motivated by~\cite{wang2020reinforcement}. On the other hand, ~\cite{kim2021hamilton} proposed a model-free reinforcement learning algorithm called Hamilton--Jabobi deep $Q$-learning (HJDQN) to tackle continuous-in-time deterministic optimal control problems based on the viscosity solution framework. Their idea is to introduce a value function that contains both initial state and action restricting the action to be Lipschitz continuous while the value function only includes the state variable and is defined to be an optimal reward or cost corresponding to the initial state given in standard optimal control theory. This way, one can admit the viscosity solution to the corresponding Hamilton--Jacobi equation as a Q-function which consists of a pair of an initial state and an action, denoted by $Q^L(x,a)$ for some $L>0$ given. However, such an extension is meaningful only if we restrict the class of admissible controls to be $L$-Lipschitz continuous, and hence, the value function depends on $L$. It is also studied in the paper that the choice of hyperparameter $L$ is essential for better performance in practical applications while the optimal choice of $L$ is unknown. Nevertheless, their approach is still a stepping stone to understanding continuous time $Q$-learning through the lens of the theory of viscosity solution.

In this paper, we rigorously analyze the stability property of $Q^L(x,a)$ on the Lipschitz constraint as well as the convergence of $Q^L(x,a)$ as $L$ grows to infinity. We also demonstrate the rate of convergence under some structural assumptions on the dynamics and reward function. Furthermore, we introduce a slightly general landscape for admitting $Q$-function as a solution to a Hamilton--Jacobi--Bellman equation by considering a general norm and extending the existing HJDQN algorithm accordingly. Our idea is to consider different metric when defining the Lipschitz continuity of action, which allows flexibility for choosing actions.

\subsection*{Organization of paper}
The paper is organized as follows. In Section~\ref{sec:pre}, the problem setup and motivation are presented. We then provide some stability and regularity results on the viscosity solution to the Hamilton--Jacobi--Bellman equations via the theory of viscosity solutions in Section~\ref{sec:quant}. In the following section, a rate of convergence is discussed. Finally, we introduce a different class of compact control to generalize the Lipschitz continuous optimal control problem and present empirical results in Section~\ref{sec:gen} and Section~\ref{sec:num} respectively. We then conclude with Section~\ref{sec:con}.
\subsection*{Notations}
We need a set of notations used throughout the paper. Let $n,k \in\mathbb {N}$, $x=(x_1,...,x_n)\in\mathbb{R}^n$ and $p \geq 1$.
\begin{itemize}
\item  We denote $\|x\|_p :=  (\sum_{i=1}^n |x_i|^p)^{1/p})$. \item $D_x f(x):= (\frac{\partial f}{\partial x_1},...,\frac{\partial f}{\partial x_n})$ and $\Delta f := \sum_{i=1}^n \frac{\partial^2 f}{\partial x_i^2}$.

\item Define $k$-dimensional ball as
\[
B_{a,k}(y):=\{x \in \mathbb{R}^k: \|x-y\|_2 \leq a\},
\]
for $y\in\mathbb{R}^k$.
\item For $k$-continuosly differentiable function $f:\mathbb{R}^n \mapsto \mathbb{R}$,  
\[
\|f\|_{C^k(\mathbb{R}^n)} := \sum_{\alpha_1+...+\alpha_n\leq k}  \sup_{\mathbb{R}^n} \|D^\alpha f(x)\|_2.
\]

\item We denote
\begin{equation}\label{eq:mollifier}
\eta^\varepsilon(x):= \frac{1}{\varepsilon} \eta (\frac{x}{\varepsilon}), 
\end{equation}
where
\[
\eta(x):=\begin{cases}
e^{-1/(1-\|x\|_2^2)}\quad\text{for}\quad \|x\|_2\leq 1,\\
0\quad\text{else}.
\end{cases}
\]

\end{itemize}

\subsection*{Acknowledgement}
This work was supported by the National Research Foundation of Korea (NRF) grant funded by the Korea government(MSIT) (RS-2023-00219980) and the second author is partially supported by National Research Foundation of Korea (NRF) grant funded by the Korea government(MSIT) (RS-2023-00211503). Authors are thankful to Hung Vinh Tran at the University of Wisconsin-Madison for sharing ideas and discussions.
\section{Setup and Preliminaries}\label{sec:pre}
Let us introduce the collection of control 
\begin{equation}\label{eq:controlset}
    \mathcal{A}:=\{  a(\cdot) : [0, \infty) \to \mathbb{R}^m , a(\cdot) \textit{ is measurable}\}.
\end{equation}
Given $a(\cdot) \in \mathcal{A}$, we consider a dynamics evolving under
\[
\begin{cases}
x'(s)=f(x(s),a(s)),\\
x(0)=x,
\end{cases}
\]
where $x(t)$ is the system state and $f(x,a):\mathbb{R}^n \mapsto \mathbb{R}^m \rightarrow \mathbb{R}^n$ 
is smooth. Hence, it is known that $x(t)$ is absolutely continuous. Then, the standard infinite-horizon discounted optimal control problem is formulated as
\[
Q(x) = \sup_{a\in \mathcal{A}}\left\{ \int_0^\infty e^{-\gamma s} r(x(s),a(s)) \mathrm{d}s : x(0)=x \in \mathbb{R}^n \right\},
\]
where $\gamma>0$ denotes the discount factor, and $r(x,a):\mathbb{R}^n \times \mathbb{R}^m \rightarrow \mathbb{R}$ is a smooth reward function.

Given $L>0$, we introduce the class of the Lipschitz controls denoted by
\begin{equation}\label{eq:lipcontrol}
\mathcal{A}^{L}:=\{  a(\cdot) \in \mathcal{A}: \|a(s_1))- a(s_2)\|_2 \leq L|s_1-s_2| \quad \text{for all}\quad s_1,s_2\in [0, \infty) \},
\end{equation}
and define the value function as
\begin{equation}\label{eq:Q_L_dynamic}
Q^L(x,a) = \sup_{a\in \mathcal{A}^{L}}\left\{ \int_0^\infty e^{-\gamma s } r(x(s),a(s)) \mathrm{d}s : x(0)=x , a(0) =a\right\}.
\end{equation}

\begin{assumption}\label{ass:naive}
Throughout the paper, we assume that $f$ and $r$ are Lipschitz continuous, that is, there exists $C>0$ such that
\[
\|f\|_{L^\infty(\mathbb{R}^n\times \mathbb{R}^m)}+\|r\|_{L^\infty(\mathbb{R}^n\times \mathbb{R}^m)} 
+\|f\|_{\text{Lip}(\mathbb{R}^n\times \mathbb{R}^m)}
+
\|r\|_{\text{Lip}(\mathbb{R}^n\times \mathbb{R}^m)}
\leq C.
\]
\end{assumption}
Under the assumption above, it is known that $Q(x)$ and $Q^L(x,a)$ solve
\[
\gamma Q - \sup_{a\in\mathbb{R}^m} (D_x Q \cdot f(x,a)+r(x,a))=0,
\]
and
\begin{equation}\label{eq:ql}
\gamma Q^L - D_x Q^L \cdot f(x,a) - L\|D_a Q^L\|_2 -r(x,a)=0,
\end{equation}
respectively in viscosity sense~\cite{tran2021hamilton,bardi1997optimal,kim2021hamilton}. 

We end this section by reminding of the regularity property of $Q$. A further investigation on $Q^L$ will be presented in the following section. In the classical infinite-horizon optimal control problem, it is often assumed that the control takes values in a compact set as opposed to our setting. Nevertheless, we still have the same regularity property.

\begin{proposition}\label{prop:lip_reg_q}
Under Assumption~\ref{ass:naive}, the unique viscosity solution $Q$ to   
\[
\gamma Q - \sup_{a\in\mathbb{R}^m} (D_x Q \cdot f(x,a)+r(x,a))=0
\]
is Lipschitz continuous if $\gamma>  \|f\|_{\text{Lip}(\mathbb{R}^n\times \mathbb{R}^m)}$.
\end{proposition}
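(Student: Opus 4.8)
The plan is to exploit the identification of the unique viscosity solution $Q$ with the value function of the underlying control problem. Under Assumption~\ref{ass:naive} the boundedness of $r$ guarantees that $Q$ is a well-defined bounded function, since $|Q(x)|\le \|r\|_{L^\infty}/\gamma$, and the classical theory (see~\cite{bardi1997optimal,tran2021hamilton}) ensures that it is the unique bounded viscosity solution of the Hamilton--Jacobi--Bellman equation in the statement. Writing $L_f := \|f\|_{\text{Lip}(\mathbb{R}^n\times\mathbb{R}^m)}$ and $L_r := \|r\|_{\text{Lip}(\mathbb{R}^n\times\mathbb{R}^m)}$, I would then estimate $|Q(x)-Q(y)|$ directly by propagating the \emph{same} control $a(\cdot)\in\mathcal{A}$ along the two trajectories issued from $x$ and $y$.

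First I would fix $a(\cdot)\in\mathcal{A}$ and let $x(\cdot),y(\cdot)$ solve the dynamics with $x(0)=x$, $y(0)=y$ under this common control. Subtracting the two equations and using the Lipschitz bound on $f$, together with the fact that the control coordinate cancels, gives
\[
\|x(s)-y(s)\|_2 \le \|x-y\|_2 + L_f\int_0^s \|x(\tau)-y(\tau)\|_2\,d\tau,
\]
so Gronwall's inequality yields the trajectory-divergence estimate $\|x(s)-y(s)\|_2 \le e^{L_f s}\|x-y\|_2$ for every $s\ge 0$.

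Next I would transfer this bound to the rewards. By the Lipschitz continuity of $r$ and the previous estimate,
\[
\left|\int_0^\infty e^{-\gamma s}\big(r(x(s),a(s))-r(y(s),a(s))\big)\,ds\right| \le L_r\,\|x-y\|_2\int_0^\infty e^{-(\gamma-L_f)s}\,ds = \frac{L_r}{\gamma-L_f}\,\|x-y\|_2,
\]
where the integral converges precisely because $\gamma>L_f$. To pass from a fixed control to the supremum defining $Q$, I would invoke the standard $\varepsilon$-optimal control argument: choosing $a(\cdot)$ that is $\varepsilon$-optimal for the initial state $x$ and testing the same control against the state $y$ gives $Q(x)-Q(y)\le \frac{L_r}{\gamma-L_f}\|x-y\|_2+\varepsilon$; interchanging the roles of $x$ and $y$ and letting $\varepsilon\to0$ establishes
\[
|Q(x)-Q(y)|\le \frac{L_r}{\gamma-L_f}\,\|x-y\|_2.
\]

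The heart of the matter, and the only place where the hypothesis enters, is the competition between the exponential spreading rate $e^{L_f s}$ of nearby trajectories and the discount factor $e^{-\gamma s}$: the condition $\gamma>L_f$ is exactly what makes the discount dominate the spreading, rendering $\int_0^\infty e^{-(\gamma-L_f)s}\,ds$ finite and producing the explicit Lipschitz constant $L_r/(\gamma-L_f)$. Alternatively, one could bypass the value-function representation and argue purely within viscosity-solution theory by doubling variables and invoking a comparison principle for the Hamilton--Jacobi--Bellman equation, but the control-theoretic estimate above is shorter and makes the role of $\gamma>L_f$ transparent.
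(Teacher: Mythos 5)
Your proposal is correct: the paper gives no proof of this proposition itself, deferring to \cite{tran2021hamilton}, and the argument there is exactly the one you give --- identify $Q$ with the value function, run a common ($\varepsilon$-optimal) control from both initial states, bound the trajectory divergence by $e^{L_f s}\|x-y\|_2$ via Gronwall, and use $\gamma > L_f$ so that the discount dominates the spreading, yielding the Lipschitz constant $L_r/(\gamma-L_f)$. This is moreover precisely the technique the paper itself deploys for the analogous regularity of $Q^L$ in Lemma~\ref{lem:Lip_regualrity}, so your proof is essentially the same approach as the paper's.
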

\begin{proof}
See~\cite{tran2021hamilton}.
\end{proof}
In the following section, it is shown that $Q^L$ is also Lipschitz continuous. In addition to that we study the stability of $Q^L$ in $L$ as well as the convergence as $L$ grows to infinity.



\section{Some quantitative properties of $Q^L$}\label{sec:quant}
In this section, we explore some quantitative behaviors of $Q^L$ by employing standard methods~\cite{tran2021hamilton} used for analyzing the regularity properties of viscosity solutions to Hamilton-Jacobi-Bellman (HJB) equations. We first show that $Q^L$ is uniformly Lipschitz in the state and action variable and derive the estimate for the rate of change in $L$. 

\subsection{Lipschitz Regularity}
Let us begin by providing a result on Lipschitz continuity of $Q^L$ in state and action variables. The following lemma suggests that the Lipschitz constant is independent of $L$. To show this, as demonstrated in~\cite{kim2021hamilton}, we introduce a new state variable $z:=(x,a)$ and control variable $b(\cdot):=\dot a(\cdot)$ such that $\|b(\cdot)\|_2 \leq L$. 

\begin{lemma}\label{lem:Lip_regualrity}
Let $L>0$ be given and suppose that $Q^L$ is the solution of \eqref{eq:ql}, and $f$ and $r$ satisfy Assumption \ref{ass:naive}. 
In addition, we assume that
\begin{equation*}
\gamma > \|f\|_{\text{Lip}(\mathbb{R}^n \times \mathbb{R}^m)}.
\end{equation*}
Then, $Q^L(x,a)$ is Lipschitz continuous in $x$ and $a$. Furthermore, we have 
the estimate
\begin{equation}\label{eq:lipschitz_est}
|Q^L|+\|D_xQ^{L}\|_2+L\|D_aQ^{L}\|_2 \leq  C
\end{equation}
for some $C>0$. 
\end{lemma}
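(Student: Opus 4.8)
The plan is to recast the $L$-Lipschitz control problem as a \emph{classical} optimal control problem with a compact control set and then read off the regularity of $Q^L$ from the augmented problem. Following the construction flagged before the statement, set $z:=(x,a)\in\mathbb{R}^{n+m}$, take the new control to be $b(\cdot):=\dot a(\cdot)$ constrained to the closed ball $\{b:\|b\|_2\le L\}$, and write the augmented dynamics as $z'(s)=F(z(s),b(s))$ with $F(z,b):=(f(x,a),\,b)$ and reward $\tilde r(z):=r(x,a)$. Under this change of variables $Q^L$ becomes the value function of an infinite-horizon discounted problem with compact controls, whose Hamilton--Jacobi--Bellman equation is exactly \eqref{eq:ql} once one computes $\sup_{\|b\|_2\le L} D_aQ^L\cdot b = L\|D_aQ^L\|_2$. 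The three summands in \eqref{eq:lipschitz_est} will then be bounded separately.

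First I would dispose of the boundedness: $|Q^L|\le \|r\|_{L^\infty}/\gamma\le C$ follows directly from \eqref{eq:Q_L_dynamic}, since the integrand is bounded by $\|r\|_{L^\infty}$ and damped by $e^{-\gamma s}$. For the Lipschitz bound in $z$ I would compare two trajectories $z_1(\cdot),z_2(\cdot)$ driven by the \emph{same} admissible control $b$. The crucial observation is that the control cancels in $F(z_1,b)-F(z_2,b)=(f(x_1,a_1)-f(x_2,a_2),\,0)$, so that $\tfrac{d}{ds}\|z_1(s)-z_2(s)\|_2\le \|f\|_{\text{Lip}}\,\|z_1(s)-z_2(s)\|_2$, and Gr\"onwall gives $\|z_1(s)-z_2(s)\|_2\le e^{\|f\|_{\text{Lip}}s}\|z_1-z_2\|_2$. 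Inserting this into the reward difference and using $\gamma>\|f\|_{\text{Lip}}$ to integrate yields, after the standard symmetric comparison,
\[
|Q^L(z_1)-Q^L(z_2)|\le \frac{\|r\|_{\text{Lip}}}{\gamma-\|f\|_{\text{Lip}}}\,\|z_1-z_2\|_2 .
\]
I want to stress that this constant is \emph{independent of $L$}, precisely because neither the Gr\"onwall rate nor the reward modulus sees $L$. Specializing $z_1,z_2$ to differ only in $x$, respectively only in $a$, gives $\|D_xQ^L\|_2,\|D_aQ^L\|_2\le C$ at almost every point, by Rademacher's theorem.

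The genuinely quantitative term is the weighted quantity $L\|D_aQ^L\|_2$, and note that the comparison above only produces $\|D_aQ^L\|_2\le C$ rather than the sharper $\|D_aQ^L\|_2\le C/L$ that \eqref{eq:lipschitz_est} demands. I would instead extract this factor of $L$ directly from the equation: since $Q^L$ is Lipschitz it is differentiable almost everywhere, and at such points the viscosity equation \eqref{eq:ql} holds in the classical sense, so that
\[
L\|D_aQ^L\|_2=\gamma Q^L-D_xQ^L\cdot f(x,a)-r(x,a)\quad\text{a.e.}
\]
Bounding the right-hand side by the already-established $|Q^L|\le C$ and $\|D_xQ^L\|_2\le C$ together with $\|f\|_{L^\infty},\|r\|_{L^\infty}\le C$ from Assumption~\ref{ass:naive} yields $L\|D_aQ^L\|_2\le C$ almost everywhere, hence in the essential-sup sense. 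Adding the three bounds delivers \eqref{eq:lipschitz_est}.

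The main obstacle is the $L$-uniformity, and it deserves the most care on two fronts. One must verify that the compact-control reformulation produces an augmented dynamics whose Lipschitz constant in $z$ is independent of $L$; this holds precisely because $b$ enters $F$ additively in the action slot and therefore cancels in trajectory differences, so the effective Lipschitz rate seen by Gr\"onwall is only $\|f\|_{\text{Lip}}$. One must also justify reading the gradient bound off the almost-everywhere-valid equation for a merely Lipschitz viscosity solution rather than a $C^1$ one, for which the standard fact that a Lipschitz viscosity solution of a first-order Hamilton--Jacobi equation satisfies the equation pointwise at its points of differentiability is the key ingredient.
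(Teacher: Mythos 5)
Your proposal is correct and follows essentially the same route as the paper: the same augmentation $z=(x,a)$ with compact control $\|b\|_2\le L$, the same cancellation of $b$ in trajectory differences followed by Gr\"onwall and the integrability condition $\gamma>\|f\|_{\text{Lip}}$, and the same supremum-over-controls comparison to get an $L$-independent Lipschitz constant. Your final step---reading $L\|D_aQ^L\|_2\le C$ off the equation at points of differentiability via Rademacher---is exactly what the paper compresses into its closing sentence that \eqref{eq:lipschitz_est} ``follows from the equation \eqref{eq:ql}'', and you justify it correctly.
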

\begin{proof}
Let $\mathcal{B}^L:=\{b(\cdot):[0,\infty)\rightarrow \mathbb{R}^m,b(\cdot)\text{ is measurable and } \|b(\cdot)\|_2 \leq L\}$. Given $z:=(x,a) \in \mathbb{R}^{n+m}$ and $b(\cdot)\in\mathcal{B}^L$, let us define
\[
V_{b(\cdot)}^L(x,a) := \int_{0}^\infty e^{-\gamma s} r(x(s),a(s)) \mathrm{d}s,
\]
subject to
\[
\begin{cases}
x'(s)=f(x(s),a(s))\quad\text{for}\quad s>0,\\
a'(s)= b(s)\quad\text{for}\quad s>0,\\
x(0)=x,\\
a(0)=a.
\end{cases}
\]
Fixing $\tilde z=(\tilde x,\tilde a) \in\mathbb{R}^{n+m}$ and $b(\cdot) \in \mathcal{B}^L$ as well as the trajectory $\tilde z(s):=(\tilde x(s),\tilde a(s))$ satisfying
\[
\begin{cases}
\tilde x' (s) =f(\tilde x(s),\tilde a(s))\quad\text{for}\quad s>0,\\
\tilde a' (s) = b(s)\quad\text{for}\quad s>0,\\
\tilde x(0)=\tilde x,\\
\tilde a(0)=\tilde a,
\end{cases}
\]
and letting $z(s):=(x(s),a(s))$, we have that
\begin{equation*}
\|z'(s)-\tilde z'(s)\|_2 \leq \|f\|_{\text{Lip}(\mathbb{R}^n \times \mathbb{R}^m)}\|z(s)-\tilde z(s) \|_2.
\end{equation*}
For simplicity, let $\gamma_0:=\|f\|_{\text{Lip}(\mathbb{R}^n \times \mathbb{R}^m)}$. Invoking Gronwall's inequality, we have that 
\begin{equation*}
\|z(s)-\tilde z(s)\|_2 \leq e^{\gamma_0 s} \|z(0)-\tilde z(0)\|_2 = e^{\gamma_0 s}(\|x-\tilde x\|_2+ \|a-\tilde a\|_2).
\end{equation*}
Now observing
\begin{equation*}
\begin{split}
|V^L_{b(\cdot)}(x,a))-V^L_{b(\cdot)}(\tilde x,\tilde a)| &= \bigg|\int_0^\infty e^{-\gamma s} r(x(s),a(s))\mathrm{d}s - \int_0^\infty e^{-\gamma s}r(\tilde x(s),\tilde a(s)) \mathrm{d} s \bigg |\\
& \leq \|r\|_{\text{Lip}(\mathbb{R}^n \times \mathbb{R}^m)} \int_0^\infty e^{-\gamma s} (\|x(s)-\tilde x(s)\|_2+\|a(s)-\tilde a(s)\|_2)\mathrm{d} s\\
&\leq \|r\|_{\text{Lip}(\mathbb{R}^n \times \mathbb{R}^m)} (\|x-\tilde x\|+\|a-\tilde a\|_2)\int_0^\infty e^{(-\gamma +\gamma_0)s}\mathrm{d} s\\
&\leq  C (\|x-\tilde x\|_2+\|a-\tilde a\|_2),
\end{split}
\end{equation*}
for some $C>0$ since $\gamma > \gamma_0$, we deduce that 
\[
|Q^L(x,a) - Q^L(\tilde x,\tilde a)| \leq C (\|x-\tilde x\|_2+\|a-\tilde a\|_2)
\]
by taking supremum over $b\in\mathcal{B}^L$. 

The inequality \eqref{eq:lipschitz_est} follows from the equation \eqref{eq:ql} and we finish the proof.
\end{proof}

\subsection{Error Bounds of $Q^{L+\ell}-Q^{L}$ for $L,\ell \geq 0$.}
Using the Lipschitz estimate above, one can estimate the rate of change of $Q^L$ with respect to $L$ based on the standard doubling variable argument~\cite{tran2021hamilton,kim2020state,evans2022partial,crandall1983viscosity}. The result we propose implies that the value function $Q^L$ does increase abruptly as $L$ increases. Numerical justification for this property is also presented in Section~\ref{sec:num}.
\begin{lemma}\label{lem:stepwise_converge}
Let Assumption \ref{ass:naive} be enforced and $\gamma > \|f\|_{\text{Lip}(\mathbb{R}^n \times \mathbb{R}^m)}$. Given $L>0$ and $\ell>0$, let $Q^L(x,a)$ and $Q^{L+\ell}(x,a)$ be solutions to \eqref{eq:ql} repsectively where the latter is associated with $L+\ell$ instead of $L$. Then, there exists a constant $C$ satisfying 
\begin{equation}\label{eq:Q_L_bound}
0 \leq Q^{L+\ell}(x,a) - Q^L(x,a) \leq \frac{C\ell}{L+\ell},
\end{equation}
and hence, 
\[
\limsup_{\ell\rightarrow 0} \frac{Q^{L+\ell} (x,a)-Q^L(x,a)}{\ell} \leq \frac{C}{L}.
\]
\end{lemma}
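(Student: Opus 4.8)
The plan is to establish the two-sided bound \eqref{eq:Q_L_bound} by treating the lower and upper inequalities separately, the lower one being elementary and the upper one resting on the Lipschitz estimate \eqref{eq:lipschitz_est} combined with a doubling-of-variables (comparison) argument. The final $\limsup$ statement is then immediate: dividing \eqref{eq:Q_L_bound} by $\ell$ gives $\frac{Q^{L+\ell}(x,a)-Q^L(x,a)}{\ell}\le \frac{C}{L+\ell}$, whose limit as $\ell\to0$ is $\frac{C}{L}$.

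For the lower bound I would argue by monotonicity of the admissible class in $L$. Since every $L$-Lipschitz control is also $(L+\ell)$-Lipschitz, we have $\mathcal{A}^{L}\subseteq\mathcal{A}^{L+\ell}$, so the supremum defining $Q^{L+\ell}$ in \eqref{eq:Q_L_dynamic} is taken over a larger set and hence $Q^{L+\ell}\ge Q^{L}$. Equivalently, one checks at the PDE level that $Q^{L}$ is a viscosity subsolution of the $(L+\ell)$-equation: if a smooth $\phi$ touches $Q^{L}$ from above at $z_0=(x_0,a_0)$, the subsolution inequality for \eqref{eq:ql} gives $\gamma Q^{L}(z_0)-D_x\phi\cdot f - L\|D_a\phi\|_2 - r\le 0$, and subtracting the nonnegative quantity $\ell\|D_a\phi\|_2$ only decreases the left-hand side, so the $(L+\ell)$-inequality holds; the comparison principle then yields $Q^{L}\le Q^{L+\ell}$.

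For the upper bound I would run the standard doubling argument on $\mathbb{R}^{n+m}$. Writing $z=(x,a)$ and fixing $\epsilon>0$, consider
\[
\Phi_\epsilon(z,w)=Q^{L+\ell}(z)-Q^{L}(w)-\frac{\|z-w\|_2^2}{2\epsilon},
\]
and let $(z_\epsilon,w_\epsilon)$ be a maximizer, whose existence follows after inserting a small localizing penalization $-\delta(\|z\|_2^2+\|w\|_2^2)$ and letting $\delta\to0$, using that $Q^{L+\ell},Q^{L}$ are bounded and uniformly Lipschitz by \eqref{eq:lipschitz_est}. The standard estimate then gives $\|z_\epsilon-w_\epsilon\|_2=O(\epsilon)$ and $\frac{\|z_\epsilon-w_\epsilon\|_2^2}{\epsilon}\to0$. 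The penalization supplies a common test gradient $p_\epsilon=(p_\epsilon^x,p_\epsilon^a):=(z_\epsilon-w_\epsilon)/\epsilon$, so the subsolution inequality for $Q^{L+\ell}$ at $z_\epsilon$ and the supersolution inequality for $Q^{L}$ at $w_\epsilon$ read
\[
\gamma Q^{L+\ell}(z_\epsilon)-p_\epsilon^x\cdot f(z_\epsilon)-(L+\ell)\|p_\epsilon^a\|_2-r(z_\epsilon)\le0,
\]
\[
\gamma Q^{L}(w_\epsilon)-p_\epsilon^x\cdot f(w_\epsilon)-L\|p_\epsilon^a\|_2-r(w_\epsilon)\ge0.
\]
Subtracting, the difference of the drift and reward terms is controlled by $\gamma_0\frac{\|z_\epsilon-w_\epsilon\|_2^2}{\epsilon}+\|r\|_{\text{Lip}}\|z_\epsilon-w_\epsilon\|_2\to0$ (with $\gamma_0:=\|f\|_{\text{Lip}(\mathbb{R}^n\times\mathbb{R}^m)}$), while the mismatch of the Lipschitz-penalty coefficients leaves exactly the term $\ell\|p_\epsilon^a\|_2$.

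The crux, and the step I expect to be the main obstacle, is to bound $\ell\|p_\epsilon^a\|_2$ by $\frac{C\ell}{L+\ell}$ rather than the weaker $\frac{C\ell}{L}$; this is what produces the sharp denominator $L+\ell$. Here the quantitative estimate \eqref{eq:lipschitz_est} must be invoked for $Q^{L+\ell}$ (not for $Q^{L}$): since the smooth test function touches $Q^{L+\ell}$ from above at $z_\epsilon$ and $Q^{L+\ell}$ is Lipschitz in the $a$-variable with constant $\|D_aQ^{L+\ell}\|_2\le C/(L+\ell)$, restricting the touching inequality to variations in $a$ alone forces $\|p_\epsilon^a\|_2\le C/(L+\ell)$. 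Feeding this in and sending $\epsilon\to0$ gives $\gamma\,\sup_z\big(Q^{L+\ell}(z)-Q^{L}(z)\big)\le \frac{C\ell}{L+\ell}$, which is the claimed bound after renaming $C/\gamma$ as $C$. As a cleaner alternative that bypasses the explicit maximizer analysis, one may instead verify directly that $Q^{L+\ell}-\frac{C\ell}{\gamma(L+\ell)}$ is a viscosity subsolution of the $L$-equation \eqref{eq:ql} — the constant shift absorbs precisely the error $\ell\|D_aQ^{L+\ell}\|_2\le C\ell/(L+\ell)$ — and then apply the comparison principle against the solution $Q^{L}$.
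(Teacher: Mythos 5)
Your main argument is correct and essentially the paper's own proof: the lower bound via monotonicity of $\mathcal{A}^L\subseteq\mathcal{A}^{L+\ell}$, and the upper bound via doubling of variables, with the crux handled exactly as in the paper — the refined estimate $(L+\ell)\|D_aQ^{L+\ell}\|_2\leq C$ from \eqref{eq:lipschitz_est} is what bounds the $a$-component of the penalization gradient, yielding the sharp denominator $L+\ell$. The paper packages this step slightly differently (it compares $\Psi(\bar x,\bar y,\bar b,\bar b)\leq\Psi(\bar x,\bar y,\bar a,\bar b)$ to get $\|\bar a-\bar b\|_2\leq(\tfrac{C}{L+\ell}+2\delta)\varepsilon$ directly, and localizes with $C^1$ functions $\mu,\tilde\mu$ of gradient at most one rather than your quadratic penalty), but your superdifferential formulation — $p_\epsilon^a\in$ the $a$-part of $D^+Q^{L+\ell}(z_\epsilon)$, hence $\|p_\epsilon^a\|_2\leq C/(L+\ell)$ by Lipschitz continuity in $a$ — is the same mechanism. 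Your closing alternative is a genuinely different and cleaner route the paper does not take: observing that any test function touching $Q^{L+\ell}$ from above has $\|D_a\phi\|_2\leq C/(L+\ell)$, so $Q^{L+\ell}-\tfrac{C\ell}{\gamma(L+\ell)}$ is a viscosity subsolution of the $L$-equation, and comparison gives \eqref{eq:Q_L_bound} in one stroke. This bypasses the maximizer analysis entirely, at the cost of invoking the comparison principle for \eqref{eq:ql} as a black box — which is harmless here, since the paper already relies on comparison (e.g., in Theorem~\ref{thm:conv}), and the doubling argument it replaces is precisely the engine behind that comparison principle anyway.
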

\begin{proof}
Since $Q^L$ is defined as \eqref{eq:Q_L_dynamic} and $\mathcal{A}^{L}$, the set of Lipschitz continuous controls, increases in $L$, it  follows that
\[
Q^L(x,a) \leq Q^{L+\ell}(x,a) \quad \text{for every $(x,a) \in \mathbb{R}^{n} \times \mathbb{R}^{m}$.}
\]
Therefore, the left side of inequality \eqref{eq:Q_L_bound} holds.
 
To show the right side of inequality \eqref{eq:Q_L_bound}, we use the doubling variable technique~\cite{tran2021hamilton,achdou2013introduction,crandall1983viscosity,kim2020state}.
Let $\mu(x) \in C^1(\mathbb{R}^n)$ $\tilde{\mu}(a)\in C^1(\mathbb{R}^{m})$ satisfy
\begin{equation}\label{eq:mu_property}
\begin{aligned}
	\|D_x\mu\|_{2} &\leq 1\quad\text{and} \quad\| D_a\tilde{\mu}\|_{2} &\leq 1.
\end{aligned}
\end{equation}
For $\delta, \varepsilon>0$ and $(x,y,a,b)\in \mathbb{R}^{2n} \times \mathbb{R}^{2m}$, we define an auxiliary function as follows:
\begin{align*}
\Psi(x,y,a,b)	& := Q^{L+ \ell}(x,a) - Q^{L}(y,b) - \frac{\|x-y\|_{2}^{2} +\|a  - b\|_{2}^{2} }{2\varepsilon} 
-\delta \left(\mu(x)+\mu(y) + \tilde{\mu}(a) +  \tilde{\mu}(b)
\right).
\end{align*}
We choose a point \((\bar{x}, \bar{y},\bar{a} ,\bar{b}) \in \mathbb{R}^{2n} \times \mathbb{R}^{2m} \) satisfying
\[
\Psi(\bar{x}, \bar{y}, \bar{a},  \bar{b}) = \sup_{(x,y,a,b)\in \mathbb{R}^{2n}\times \mathbb{R}^{2m}}\{\Psi(x,y,a, b) : (x,y,a, b)\in \mathbb{R}^{2n} \times \mathbb{R}^{2m} \}.
\]
From the inequality $ \Psi(\bar{y},\bar{y},\bar{a} ,\bar{b})
\leq 
\Psi(\bar{x},\bar{y},\bar{a} ,\bar{b})$, we see that 
\begin{align*}
	&
  Q^{L+\ell}(\bar{y},\bar{a}) 
  -\delta \mu(\bar{y})  \leq 
 Q^{L+\ell}(\bar{x},\bar{a})  - \frac{\|  \bar{x}-\bar{y} \|_{2}^{2}  }{2\varepsilon}  -\delta \mu(\bar{x})  
.
\end{align*}
Recalling Lemma~\ref{lem:Lip_regualrity} and \eqref{eq:mu_property}, we establish that
\begin{align*}
\frac{ \|\bar{x}-\bar{y} \|_{2}^{2} }{2\varepsilon} &\leq Q^{L}(\bar{x},\bar{a}) -Q^{L}(\bar{y}, \bar{a}) -\delta (\mu(\bar{y}) - \mu(\bar{x}) )\\
&\leq C(1+\delta) \|\bar{x}-\bar{y}\|_{2}
\end{align*}
for some constant $C>0$. 
Rearranging inequality above gives
\begin{equation}\label{eq:x_y}
    \|\bar{x}-\bar{y}\|_{2} \leq C(1+\delta)\varepsilon.
\end{equation}
Similarly, we start with the inequality
$\Psi(\bar{x},\bar{y}, \bar{b}, \bar{b}) \leq \Psi(\bar{x}, \bar{y}, \bar{a}, \bar{b})$. Invoking Lemma~\ref{lem:Lip_regualrity} and \eqref{eq:mu_property} once again,
\begin{align*}\frac{
\|\bar{a} - \bar{b}\|_2^2
}{2\varepsilon} 
	 &\leq
	 Q^{L+\ell}(\bar{x},\bar{a} ) - Q^{L+\ell}(\bar{x}, \bar{b})+\delta(\tilde{\mu}(\bar{b}) -\tilde{\mu}(\bar{a})  ) \\
	 &\leq \frac{C\|\bar{a} - \bar{b}\|_{2} }{L+\ell} +\delta \|\bar{a}-\bar{b}\|_{2}.
	 \end{align*}
Dividing both side by $\|\bar{a} - \bar{b}\|_2$, we derive that
\begin{equation}\label{eq:a_b}
\|\bar{a} - \bar{b}\|_{2}  \leq  \left(\frac{C}{L+\ell} +2\delta \right)\varepsilon.
	 \end{equation}
Since the map
\[
(x,a)\mapsto
 Q^{L+\ell}(x, a) - \frac{\|x-\bar{y}\|^2_{2} + \|a-\bar{b}\|^{2}_{2} } {2\varepsilon} -\delta(\mu(x) +\tilde{\mu}(a))
\]
obtains a local maximum value at $(\bar{x}, \bar{a})$, we have that
\begin{equation}\label{eq:L_l_upper_bound}
\gamma Q^{L+\ell}(\bar{x},\bar{a}) - f(\bar{x},\bar{a})\cdot\left( 
\frac{\bar{x}-\bar{y}}{\varepsilon}
+\delta D_x\mu(\bar{x})
\right)
-(L+\ell)
\left\|\frac{\bar{a}-\bar{b}}{\varepsilon} +\delta D_a\tilde{\mu}(\bar{a})\right\|_{2} -r(\bar{x},\bar{a})\leq 0,
\end{equation}
by the subsolution test.

Similarly, the map
$$
(y,b) \mapsto Q^{L}(y,b)-
\left(-\frac{\|y-\bar{x}\|_{2}^2 + \|b-\bar{a}\|_{2}^2}{2\varepsilon}
-\delta (\mu(y)+\tilde{\mu}(b)) 
\right)
$$
obtains a local minimum value at $(\bar{y},\bar{b})$. 
From the definition of viscosity supersolution, we see that
\begin{equation}\label{eq:L_lower_bound}
\gamma Q^{L}(\bar{y},\bar{b}) +
f(\bar{y},\bar{b})\cdot\left( 
\frac{\bar{y}-\bar{x}}{\varepsilon}
-\delta D_x\mu(\bar{y})
\right)
-L
\left\|\frac{\bar{a}-\bar{b}}{\varepsilon} +\delta D_a\tilde{\mu}(\bar{b})\right\|_{2} -r(\bar{y},\bar{b
})\geq 0.
\end{equation}
Subtracting \eqref{eq:L_lower_bound} from 
 \eqref{eq:L_l_upper_bound}, we have
\begin{equation}\label{eq:L+l_L}
    \begin{aligned}
        \gamma & \left(
        Q^{L+\ell}(\bar{x}, \bar{a} )
        -
        Q^{L}(\bar{y}, \bar{b} )
        \right)\\
        &\leq  (f(\bar{x},\bar{a}) - f(\bar{y}, \bar{b}) )\cdot\left(\frac{\bar{x}-\bar{y}}{\varepsilon}\right) + (r(\bar{x},\bar{a})- r(\bar{y},\bar{b}))  + \ell \left\|\frac{\bar{a}-\bar{b}}{\varepsilon}\right\|_2 \\
        &\quad 
         +\delta\left(\|f(\bar{x},\bar{a})\|_{2} \|D_x\mu(\bar{x})\|_{2}+ \|f(\bar{y},\bar{b})\|_{2}\|D_x\mu(\bar{y})\|_{2} + (L+\ell)\|D_a\tilde{\mu}(\bar{a})\|_{2} +L
\|D_a\tilde{\mu}(\bar{b })\|_{2}\right).
    \end{aligned}
\end{equation}
Combining \eqref{eq:mu_property}, \eqref{eq:x_y} and \eqref{eq:a_b}, there exists a constant $C$ satisfying
\begin{align*}
   \gamma &\left(
        Q^{L+\ell}(\bar{x}, \bar{a} )
        -
        Q^{L}(\bar{y}, \bar{b} )
        \right) \\
        &\leq C\left( \|\bar{x}-\bar{y}\|_{2} + \|\bar{a}-\bar{b}\|_{2}\right) \frac{\|\bar{x}-\bar{y}\|_{2}}{\varepsilon}+ C\left( \|\bar{x}-\bar{y}\|_{2} +\|\bar{a}-\bar{b}\|_{2} \right) +\ell\left\|\frac{\bar{a}-\bar{b}}{\varepsilon}\right\|_{2}  + \delta \left(C+ 2L+\ell\right) \\
        & \leq C\left(\frac{\|\bar{x}-\bar{y}\|_2}{\varepsilon}+1\right)(\|\bar{x}-\bar{y}\|_2 + \|\bar{a}-\bar{b}\|_2 ) + \ell \left\|\frac{\bar{a}-\bar{b}}{\varepsilon}\right\|_{2}+\delta (C+2L+\ell)\\
        &\leq  C\varepsilon +\frac{C\ell}{L+\ell} +\delta (C+2L+\ell).
\end{align*}
Finally, invoking the inequality
\[
\Psi(x,x,a,a) \leq \Psi(\bar{x}, \bar{y}, \bar{a}, \bar{b}) \quad \forall (x,a)\in \mathbb{R}^{n} \times \mathbb{R}^{m},
\]
we have that
\[
\gamma(
Q^{L+\ell}(x,a) -Q^{L}(x,a
)) -2 \delta(\mu(x) +\tilde{\mu}(a)) \leq  C\varepsilon +\frac{C\ell}{L+\ell} +\delta (C+2L+\ell).
\]
We complete the proof by taking $\delta, \varepsilon \to 0$.
\end{proof}

\section{Convergence of $Q^L(x,a)$ to $Q(x)$}
\subsection{General convergence result}\label{sub:convergence}
We provide the general convergence result based on the stability properties of viscosity solutions. Let us recall the following definition of upper or lower half-relaxed limit~\cite{mitake2016dynamical,achdou2013introduction}. It is crucial to note that the limit functions are subsolution and supersolution to the limiting equation.

\begin{definition}
For a family of locally bounded functions on $\mathbb{R}^n$ denoted by $\{u_\alpha\}_{\alpha\in\mathbb{R}}$. We define the upper and lower half-relaxed limits $u^*$ and $u_*$ of $u_\alpha$ as
\[
u^*(x)={\limsup}^*_{\alpha \rightarrow \infty} u_\alpha(x):=\lim_{\alpha\rightarrow\infty} \sup \{u_\beta(y):\|x-y\|_2\leq\frac{1}{\beta},\beta\geq \alpha\},
\]
and
\[
u_*(x)={{\liminf}_*}_{\alpha \rightarrow \infty} u_\alpha(x):=\lim_{\alpha\rightarrow\infty} \inf \{u_\beta(y):\|x-y\|_2\leq\frac{1}{\beta},\beta\geq \alpha\}.
\]
\end{definition}
Let us state one of the main results, the convergence of $Q^L$ to $Q$ as $L$ grows to infinity.
\begin{theorem}\label{thm:conv}
Let $Q^L:=Q^L(x,a):\mathbb{R}^n \times \mathbb{R}^m \rightarrow \mathbb{R}$ be a viscosity solution to
\begin{equation*}
\gamma Q^L - D_x Q^L \cdot f(x,a) - L\|D_a Q^L\|_2 -r(x,a)=0.
\end{equation*}
Then we have that
\[
Q^L(x,a) \rightarrow Q(x)\quad\text{locally uniformly}\quad\text{as}\quad L\rightarrow \infty,
\]
where $Q:=Q(x)$ is a unique viscosity solution to
\[
\gamma Q - \sup_{a\in\mathbb{R}^m} (D_x Q \cdot f(x,a)+r(x,a))=0.
\]
\end{theorem}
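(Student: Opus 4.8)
The plan is to argue by the method of half-relaxed limits, taking Lemma~\ref{lem:Lip_regualrity} as the only source of compactness. Let $\overline{Q}$ and $\underline{Q}$ denote the upper and lower half-relaxed limits of the family $\{Q^L\}_{L>0}$ in the sense of the Definition above. The uniform bound $|Q^L|+\|D_xQ^L\|_2+L\|D_aQ^L\|_2\le C$, whose right-hand side does not depend on $L$, guarantees that $\overline{Q}$ and $\underline{Q}$ are finite and that $\underline{Q}\le\overline{Q}$ everywhere. Moreover, since the bound forces $Q^L$ to be Lipschitz in $a$ with constant $C/L\to0$, one has $|Q^L(x,a)-Q^L(x,a')|\to0$ locally uniformly, so both half-relaxed limits are independent of $a$; I will write $\overline{Q}(x)$ and $\underline{Q}(x)$. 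The whole proof then reduces to showing that $\overline{Q}$ is a viscosity subsolution and $\underline{Q}$ a viscosity supersolution of $\gamma Q-\sup_{a}(D_xQ\cdot f(x,a)+r(x,a))=0$, because the comparison principle underlying the uniqueness in Proposition~\ref{prop:lip_reg_q} (valid since $\gamma>\|f\|_{\mathrm{Lip}}$) then yields $\overline{Q}\le\underline{Q}$, hence $\overline{Q}=\underline{Q}=Q$, which is exactly locally uniform convergence.

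For the supersolution property---the easy half---I would fix $\hat a\in\mathbb{R}^m$ and a test function $\phi$ with $\underline{Q}-\phi$ attaining a strict local minimum at $x_0$, and penalize in $a$ by $+\|a-\hat a\|_2^2$. The standard half-relaxed-limit lemma produces minimizers $(x_L,a_L)$ of $Q^L(x,a)-\phi(x)+\|a-\hat a\|_2^2$ with $x_L\to x_0$, $a_L\to\hat a$ (the penalty forces the latter) and $Q^L(x_L,a_L)\to\underline{Q}(x_0)$. Writing the supersolution inequality for \eqref{eq:ql} at $(x_L,a_L)$ and simply discarding the nonnegative term $L\|D_a(\cdots)\|_2$---which has the favorable sign for a ``$\ge$'' inequality---gives $\gamma Q^L(x_L,a_L)\ge D\phi(x_L)\cdot f(x_L,a_L)+r(x_L,a_L)$. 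Letting $L\to\infty$ and then taking the supremum over $\hat a$ yields the desired supersolution inequality.

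The subsolution property is where the real difficulty lies, and it is the step I expect to be the main obstacle. The naive penalization fails: at a maximizer the gradient contribution $L\|D_a\tilde\phi\|_2$ coincides with $L\|D_aQ^L\|_2$, which Lemma~\ref{lem:Lip_regualrity} controls only by the constant $C$, not by a vanishing quantity, so the limiting inequality would carry a spurious positive term. The way around this is to exploit that $Q^L$ is \emph{uniformly bounded}. Given $\phi$ with $\overline{Q}-\phi$ having a strict local maximum at $x_0$, set $w(a):=\sqrt{1+\|a\|_2^2}$ (smooth, with $\|D_aw\|_2\le1$) and use the test function $\tilde\phi(x,a):=\phi(x)+\kappa_L w(a)$ with $\kappa_L\to0$ chosen so that $L\kappa_L\to0$, for instance $\kappa_L=L^{-2}$. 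Because $|Q^L|\le C$, the map $Q^L(x,a)-\tilde\phi(x,a)$ is coercive in $a$ for every $\kappa_L>0$, so a maximizer $(x_L,a_L)$ exists with $a_L$ automatically interior to $\mathbb{R}^m$, and the subsolution test applies. Now the gradient contribution is genuinely small, $L\|D_a\tilde\phi\|_2\le L\kappa_L\to0$, while for the particular $a_L$ one has $D\phi(x_L)\cdot f(x_L,a_L)+r(x_L,a_L)\le\sup_{a}(D\phi(x_L)\cdot f(x_L,a)+r(x_L,a))$. Passing to the limit, using $x_L\to x_0$, $Q^L(x_L,a_L)\to\overline{Q}(x_0)$, and the continuity in $x$ of $\sup_a(D\phi\cdot f+r)$, delivers the subsolution inequality. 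As a sanity check and possible shortcut, note that $\mathcal{A}^{L}\subset\mathcal{A}$ forces $Q^L(x,a)\le Q(x)$ pointwise, whence $\overline{Q}\le Q$ directly; combined with the supersolution step and comparison this already closes the argument without the delicate maximizer construction.

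Finally, with $\overline{Q}$ a subsolution and $\underline{Q}$ a supersolution of the limiting equation, comparison gives $\overline{Q}\le\underline{Q}$; since $\underline{Q}\le\overline{Q}$ always holds, the two half-relaxed limits coincide and equal the unique solution $Q$. Equality of the upper and lower half-relaxed limits is equivalent to locally uniform convergence $Q^L\to Q$, which completes the proof.
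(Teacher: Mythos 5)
Your final argument is correct and is essentially the paper's own proof: the paper also works with half-relaxed limits, obtains $Q^L\le Q$ (hence ${\limsup}^* Q^L\le Q$) and shows that the lower limit $Q_*$ is an $a$-independent supersolution of the limiting equation, then closes with the comparison principle. Your supersolution half (penalize by $\|a-\hat a\|_2^2$, use the supersolution test, discard the nonnegative term $L\|D_a\varphi\|_2$) is a sound, slightly more hands-on version of the paper's appeal to stability applied to $\gamma Q^L-D_xQ^L\cdot f-r=L\|D_aQ^L\|_2\ge 0$. Two genuine differences of route are worth recording. First, for the $a$-independence of the relaxed limits you use Lemma~\ref{lem:Lip_regualrity}, i.e.\ $L\|D_aQ^L\|_2\le C$, which requires the extra hypothesis $\gamma>\|f\|_{\text{Lip}(\mathbb{R}^n\times\mathbb{R}^m)}$; the paper instead divides \eqref{eq:ql} by $L$ and applies stability to $-\|D_aQ^L\|_2+\tfrac1L\left(\gamma Q^L-D_xQ^L\cdot f-r\right)=0$, concluding $-\|D_aQ_*\|_2\ge 0$, which needs only the uniform bound $|Q^L|\le\|r\|_{L^\infty}/\gamma$ from comparison. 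Second, the paper derives $Q^L\le Q$ by observing that $Q$, being independent of $a$, is a supersolution of \eqref{eq:ql} and invoking comparison, whereas your shortcut uses the inclusion $\mathcal{A}^L\subset\mathcal{A}$ in the control formulation; both are fine.

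There is, however, a real gap in the ``delicate maximizer construction'' you propose for the subsolution property, and you are right to be suspicious of it. At a maximizer $(x_L,a_L)$ of $Q^L(x,a)-\phi(x)-\kappa_L w(a)$ the only a priori bound is $\kappa_L w(a_L)\le 2\|Q^L\|_{L^\infty}+O(1)$, so $\|a_L\|_2$ can be as large as $O(\kappa_L^{-1})=O(L^2)$. Nothing then forces $x_L\to x_0$ or $Q^L(x_L,a_L)\to\overline{Q}(x_0)$: the half-relaxed limit controls $Q^L$ only along sequences that stay in a bounded set of $(x,a)$, and the Lipschitz bound $\|D_aQ^L\|_2\le C/L$ controls the oscillation of $Q^L(x,\cdot)$ only over scales $o(L)$, not $O(L^2)$; strengthening the penalty enough to confine $a_L$ would require $\kappa_L$ bounded below, destroying $L\kappa_L\to 0$. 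So that step fails as written. Fortunately it is also superfluous: the inequality $\overline{Q}\le Q$ that you flag as a ``sanity check'' (from $\mathcal{A}^L\subset\mathcal{A}$ and continuity of $Q$) is exactly the paper's route, and comparison then needs only the supersolution property of $\underline{Q}$, which you establish correctly. Delete the subsolution detour, promote the shortcut to the main argument, and your proof is complete and matches the paper's.
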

\begin{proof}
By the comparison principle, we have that
\[
-\frac{\|r\|_{L^\infty(\mathbb{R}^n\times \mathbb{R}^m)}}{\gamma} \leq Q^L(x,a) \leq \frac{\|r\|_{L^\infty(\mathbb{R}^n\times \mathbb{R}^m)}}{\gamma}.
\]
Noticing that $\|D_a Q(x)\|_2=0$, $Q(x)$ satisfies 
\[
\gamma Q(x) - D_x Q \cdot f(x,a) - r(x,a) -L\|D_a Q(x)\|_2 \geq 0,
\]
which implies that
\[
Q^L(x,a) \leq Q(x).
\]
Since $Q^L$ is bounded, we also get
\[
Q^* := {\limsup}^*_{L\rightarrow \infty } Q^L(x,a) \leq Q(x).
\]
Similarly,
\[
Q_* := {{\liminf}_*}_{L\rightarrow \infty } Q^L(x,a).
\]
Our goal is to show $Q^* = Q_* =Q(x)$, which is equivalent to
\[
\lim_{L\rightarrow \infty} Q^L(x,a)=Q(x).
\]
Rewriting~\eqref{eq:ql}, one can see that $Q^L$ is a viscosity solution to
\[
-\|D_a Q^L\|_2+\frac{1}{L}(\gamma Q^L - D_x Q^L \cdot f(x,a)-r(x,a))=0.
\]
Taking $L\rightarrow\infty$ and invoking the stability result on the viscosity solution~\cite{achdou2013introduction}, the lower half-relaxed limit $Q_*$ is indeed a supersolution to
\[
-\|D_a Q_*\|_2 \geq 0,
\]
and hence, $|D_a Q_*| \leq 0$. Therefore $Q_*$ is  independent of $a$, that is,
\[
Q_*(x,a) \equiv Q_*(x).
\]
Additionally, $Q^L$ also satisfies
\[
\gamma Q^L -D_x Q^L \cdot f(x,a) - r(x,a) = L \|D_a Q^L\|_2 \geq 0.
\]
Again by the stability property of viscosity solutions, taking the limit of $L$, we derive that $Q_*$ is a supersolution to
\[
\gamma Q_* - D_x Q_* \cdot f(x,a) -r(x,a) \geq 0,
\]
and 
\[
\gamma Q_* - \sup_{a\in\mathcal{R}^m}\{D_x Q_* \cdot f(x,a) -r(x,a) \}\geq 0,
\]
since $Q_*$ is independent of $a$. Therefore, again by the comparison principle,
\[
u_* \geq u,
\]
and we finish the proof as $Q_*=Q^*=Q$ implying
\[
\lim_{L\rightarrow \infty}Q^L(x,a) = Q(x).
\]
\end{proof}
The convergence property can be achieved only under the boundedness assumptions on the dynamics and reward functions, $f$ and $r$. One can show further how fast this convergence occurs introducing some structural assumptions on $f$ and $r$. 
\subsection{Rate of convergence for some special cases}
In the previous section, we show that $Q^L(x,a)$ converges to $Q(x)$ using the classical stability result on viscosity solutions. We now further investigate the rate of convergence of $Q^L(x,a) \rightarrow Q(x)$ as $L\rightarrow \infty$ under appropriate assumptions. 
We begin by showing the existence of optimal control for $Q$ and $Q^L$ based on~\cite{bardi1997optimal}. 
To prove the existence of an optimal control achieving the optimal value $Q$, we enforce the following assumption.
 \begin{assumption}\label{ass:C2_assumption}
A Borel 
function $\alpha(x,p):\mathbb{R}^{n}\times \mathbb{R}^{n} \to \mathbb{R}^{m}$ is uniquely determined to satisfy
$$
f(x,\alpha(x,p))\cdot p + r(x,\alpha(x,p))
 = \max_{a\in \mathbb{R}^{m}}(f(x,a)\cdot p + r(x,a)).
$$
We  further assume that there exists $C>0$, satisfying
\begin{align*}   
 \|f(x+h)-2f(x,a)+f(x-h,a)\|_2&\leq C\|h\|^2_{2},\\
  r(x+h)-2r(x,a)+r(x-h,a) &\geq -C\|h\|^2_{2}.
\end{align*}
 \end{assumption}

\begin{proposition}\label{prop:existence_Q}
Let $\gamma > 2\|f\|_{\text{Lip}(\mathbb{R}^n \times \mathbb{R}^m)}$ and Assumption \ref{ass:C2_assumption} be enforced. For any given  $x\in\mathbb{R}^n$, there exists $a(\cdot)\in\mathcal{A}$ from~\eqref{eq:controlset} such that
\begin{equation}\label{eq:L_optimal_reward}
Q(x)=\int_0^\infty e^{-\gamma s } r(x(s),a(s)) \mathrm{d}s,
\end{equation}
where
\begin{equation}\label{eq:L_optimal_dynamic}
\begin{cases}
x'(s) = f(x(s),a(s))\quad\text{for}\quad s>0,\\
x(0)=x.\\
\end{cases}
\end{equation}
\end{proposition}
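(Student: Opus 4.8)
The plan is to exhibit the optimal control in feedback form and verify its optimality through the Hamilton--Jacobi--Bellman equation. Since $\gamma > 2\|f\|_{\text{Lip}(\mathbb{R}^n\times\mathbb{R}^m)} > \|f\|_{\text{Lip}(\mathbb{R}^n\times\mathbb{R}^m)}$, Proposition~\ref{prop:lip_reg_q} guarantees that $Q$ is Lipschitz continuous, hence differentiable almost everywhere by Rademacher's theorem. Writing $\gamma_0 := \|f\|_{\text{Lip}(\mathbb{R}^n\times\mathbb{R}^m)}$, the natural candidate is the feedback law $\bar a(y) := \alpha(y, D_x Q(y))$, which is well defined as a Borel map precisely because Assumption~\ref{ass:C2_assumption} postulates that the maximizer $\alpha(x,p)$ is unique. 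I would then solve the closed-loop system $x'(s) = f(x(s), \bar a(x(s)))$ with $x(0)=x$ and set $a(s) := \bar a(x(s))$.

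The crucial preliminary step is to upgrade the mere Lipschitz regularity of $Q$ to semiconvexity, and this is where the second-order hypotheses and the strengthened discount condition enter. Fixing a near-optimal control for the base point $x$ and feeding the same control into the dynamics started from $x\pm h$, I would compare the three trajectories: Gronwall's inequality controls the first-order spread by $e^{\gamma_0 s}\|h\|_2$, while a second application of Gronwall to the second difference $w(s) := x_{+h}(s) - 2x(s) + x_{-h}(s)$, together with the bound $\|f(x+h,a)-2f(x,a)+f(x-h,a)\|_2 \leq C\|h\|_2^2$, yields $\|w(s)\|_2 \leq Ce^{2\gamma_0 s}\|h\|_2^2$. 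Inserting these estimates into the second difference of the reward and invoking the semiconvexity bound $r(x+h,a)-2r(x,a)+r(x-h,a)\geq -C\|h\|_2^2$ from Assumption~\ref{ass:C2_assumption}, the resulting error integrates against $e^{-\gamma s}$; the integral $\int_0^\infty e^{(2\gamma_0-\gamma)s}\,\mathrm{d}s$ converges exactly because $\gamma > 2\gamma_0$, and one obtains
\[
Q(x+h) - 2Q(x) + Q(x-h) \geq -C\|h\|_2^2,
\]
that is, $Q$ is semiconvex. This explains the role of the factor $2$ in the discount assumption.

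With semiconvexity in hand, the verification is essentially formal: at every $s$ for which $Q$ is differentiable at $x(s)$ one has, using that $\bar a(x(s))$ attains the maximum in the Hamiltonian and that $Q$ solves $\gamma Q = \sup_{a\in\mathbb{R}^m} (D_xQ\cdot f + r)$,
\[
\frac{\mathrm{d}}{\mathrm{d}s}\left(e^{-\gamma s} Q(x(s))\right) = e^{-\gamma s}\left(-\gamma Q(x(s)) + D_xQ(x(s))\cdot f(x(s),a(s))\right) = -e^{-\gamma s} r(x(s),a(s)).
\]
Integrating on $[0,\infty)$ and using that $e^{-\gamma s}Q(x(s)) \to 0$, since $Q$ is bounded, gives exactly \eqref{eq:L_optimal_reward}, with the trajectory \eqref{eq:L_optimal_dynamic} by construction.

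The main obstacle is the rigorous treatment of the closed-loop ODE: the feedback $\bar a$ is only Borel measurable, so Carath\'eodory's theorem does not apply directly, and, more seriously, one must ensure that the resulting trajectory visits the full-measure differentiability set of $Q$ for almost every time $s$, since an a priori Lipschitz curve could in principle concentrate in the null set where $Q$ fails to be differentiable. This is precisely what semiconvexity resolves: along trajectories generated by the feedback, semiconvex solutions of the Hamilton--Jacobi--Bellman equation are differentiable for a.e. $s$ and the subdifferential collapses to a single element, which legitimizes the computation above; I would carry this out by the standard semiconcavity machinery for such equations~\cite{bardi1997optimal}. An alternative route that sidesteps differentiability along trajectories is the direct method: take a maximizing sequence of controls, extract a locally uniform limit of the uniformly Lipschitz trajectories via the Arzel\`a--Ascoli theorem, and use the uniqueness of the maximizer $\alpha$ to identify the limiting velocity with an admissible classical control, thereby ruling out relaxation.
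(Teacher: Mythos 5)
Your proposal is correct in substance and shares the overall architecture of the paper's proof: both establish one-sided second-order regularity of $Q$ (you phrase it as semiconvexity of $Q$; the paper works with $\tilde Q=-Q$ and semiconcavity), both build the feedback $\alpha(x,p)$ with $p$ a selection from the appropriate sub/superdifferential using the uniqueness of the maximizer in Assumption~\ref{ass:C2_assumption}, and both ultimately delegate the delicate closed-loop verification (existence of the trajectory under merely Borel feedback and a.e.\ differentiability of $s\mapsto Q(x(s))$ with the correct chain rule) to the verification theorem of Bardi--Capuzzo-Dolcetta [Theorem 2.52, Chapter III]. The genuine difference is how the key regularity is obtained: you prove semiconvexity directly on trajectories, feeding a near-optimal control for $x$ into the dynamics from $x\pm h$ and running Gronwall twice --- once for the first difference (growth $e^{\gamma_0 s}\|h\|_2$) and once for the second difference $w(s)=x_{+h}(s)-2x(s)+x_{-h}(s)$ (growth $e^{2\gamma_0 s}\|h\|_2^2$, after splitting $f(x_+)-2f(x_0)+f(x_-)$ into a symmetric second difference plus a Lipschitz term) --- so that $\gamma>2\gamma_0$ enters through the convergence of $\int_0^\infty e^{(2\gamma_0-\gamma)s}\,\mathrm{d}s$. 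The paper instead verifies structural conditions on the Hamiltonian $H_\gamma(x,p)=\sup_a\left(-\gamma^{-1}f(x,a)\cdot p+\gamma^{-1}r(x,a)\right)$, namely Lipschitz continuity in $p$ and the inequality $H_\gamma(x+h,p+\tilde Ch)-2H_\gamma(x,p)+H_\gamma(x-h,p-\tilde Ch)\geq-\tilde C\|h\|_2^2$, and cites [Theorem 4.9, Chapter II] of the same reference, with $\gamma>2\gamma_0$ entering in the admissible choice of $\tilde C$. Your trajectory argument is more self-contained and elementary (it never touches the PDE until the verification step), while the paper's PDE route transfers the work to a citable theorem and meshes directly with the parallel proof of Proposition~\ref{prop:existence} for $Q^L$. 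One point where you are lighter than the paper: asserting that $\bar a(y)=\alpha(y,D_xQ(y))$ is Borel "because the maximizer is unique" is not quite enough, since $D_xQ$ exists only a.e.\ and the feedback must be defined everywhere (or at least along the trajectory); the paper devotes its Appendix~\ref{subsec:borel_psi} to constructing a Borel selection $\psi(x)\in D_x^+\tilde Q(x)\cap D_x^*\tilde Q(x)$ via the Kuratowski--Ryll-Nardzewski selection theorem before composing with $\alpha$. You do flag this as the main obstacle and correctly identify that semiconvexity plus the standard machinery resolves it, so this is a matter of detail to be filled in rather than a gap in the approach.
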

\begin{proof}

By \cite{tran2021hamilton}[Theorem 2.8],  we have $Q(x)$ is bounded and Lipschitz continuous in $\mathbb{R}^n$. Denoting  $\tilde Q(x):=-Q(x)$, we have
\[
\tilde Q(x)+H_\gamma(x,D_x \tilde Q(x))=0,
\]
where
\[
H_\gamma (x,p):= \sup_{a\in \mathbb{R}^{m}} \left(-\frac{f(x,a)}{\gamma}\cdot p + \frac{r(x,a)}{\gamma} \right).
\]
We will show that $\tilde Q^L(x)$ is semiconcave.
By Assumption \ref{ass:C2_assumption}, for all $x, p,  q \in \mathbb{R}^{n}$,  we find
\begin{align*}
    H_\gamma(x,p)-H_\gamma(x,q) &  =  \sup_{a\in \mathbb{R}^{m}} \left(-\frac{f(x,a)}{\gamma}\cdot p + \frac{r(x,a)}{\gamma} \right) - \sup_{a\in \mathbb{R}^{m}} \left(-\frac{f(x,a)}{\gamma}\cdot q
    + \frac{r(x,\alpha(x,p) )}{\gamma} \right)\\
    &\leq   \left(-\frac{f(x,\alpha(x,p) )}{\gamma}\cdot p + \frac{r(x,\alpha(x,p) )}{\gamma} \right) - \left(-\frac{f(x,\alpha(x,p))}{\gamma}\cdot q + \frac{r(x, \alpha(x,p))}{\gamma} \right) \\
    &\leq  \left\|-\frac{f(x, \alpha(x,p))}{\gamma}\right\|_{2}\|p-q\|_{2}.
\end{align*}
We may assume that the left-hand side is positive. Otherwise, we change $p$ and $q$. Hence,
\[
|H_\gamma(x,p)-H_\gamma(x,q)| \leq C \|p-q\|_2 
\]
for some $C>0$. 
We claim that there exists $\tilde{C}>0$
\begin{equation}\label{eq:measure_H_gama_est}
H_\gamma(x+h,p+\tilde{C}h)-2H_\gamma(x,p)+H_\gamma(x-h,p-\tilde{C}h) \geq -\tilde{C} \|h\|_2^2,
\end{equation}
for all  $x,h\in \mathbb{R}^{n}$  and  $p\in \mathbb{R}^{n}$  satisfying $\|p \|_{2}\leq 3 \|Q(x,a)\|_{\text{Lip}(\mathbb{R}^{n} )}$. 
We observe that
\begin{align*}
&H_\gamma(x+h,p+\tilde{C}h)+H_\gamma(x-h,p-\tilde{C}h)\\
&
\geq  \sup_{ a\in \mathbb{R}^{m} } 
\left(-\frac{f(x+h,a)}{\gamma}\cdot (p+\tilde{C}h)
- \frac{f(x-h,a)}{\gamma}\cdot (p-\tilde{C}h)
+\frac{r(x+h,a)}{\gamma} 
+\frac{r(x-h,a)}{\gamma} 
\right).\\
&=\sup_{ a\in \mathbb{R}^{m} } 
\left(-\frac{f(x+h,a)+f(x-h,a)-2f(x,a)}{\gamma}\cdot p 
+\frac{r(x+h,a)+r(x-h,a)-2r(x,a)}{\gamma}\right.\\ 
&\qquad\qquad\qquad\qquad  \left. -\frac{f(x+h,a)-f(x-h,a)}{\gamma}\cdot  (\tilde{C}h )
-2\frac{f(x,a)}{\gamma }\cdot p
+2\frac{r(x,a)}{\gamma} \right)=:I.
\end{align*}
Note that we have 
\begin{align*}
\frac{\|f(x+h, a)-f(x-h, a)\|_{2}}{\gamma} &\leq   \frac{2\|f\|_{ \text{Lip} (\mathbb{R}^n\times \mathbb{R}^m) } \|h\|_{2}}{\gamma}.
\end{align*}
From the inequality above and the Assumption \ref{ass:C2_assumption},  we estimate $I$ by
\begin{align*}
I &\geq  - \left(C_1 ( \|Q(x)\|_{\text{Lip}(\mathbb{R}^{n})}+1 ) +-\frac{2\tilde{C}}{\gamma} \|f\|_{\text{Lip}(\mathbb{R}^{n}\times \mathbb{R}^{m} )} \right)\|h\|_{2}^2  +2 H_{\gamma}(x,p),
\end{align*}
for some constant $C_1>0$.
By choosing $\tilde{C}$ large enough to satisfy
\[
\tilde{C} > \frac{C_1 \gamma  (  \|Q(x)\|_{\text{Lip}(\mathbb{R}^{n})}+ 1 )    }{\gamma -2 \|f\|_{\text{Lip}(\mathbb{R}^{n}\times \mathbb{R}^{m} )} },
\]
the inequality \eqref{eq:measure_H_gama_est} holds. 
Note that such $\tilde{C}$ can be chosen since $\gamma>2\|f\|_{\text{Lip}
    (\mathbb{R}^n\times \mathbb{R}^m) }$.
Recalling~\cite{bardi1997optimal}[Theorem 4.9, Chapter II],  $\tilde{Q}(x)$ is semiconcave in $x$.

By \cite{bardi1997optimal}[Proposition 4.7, Chapter II], we have $D^{+}_{x} \tilde{Q} (x)=\partial_x \tilde{Q}(x) \supset D_x^* \tilde{Q}(x) \neq \emptyset$ for all $x\in \mathbb{R}^{n} $.\footnote{Here, \( D^{+}_xQ(x) \) and \( D^{-}_xQ(x) \)  denote the super-differential and sub-differential of the function \( Q(x) \).The notation \( \partial_x Q(x) \) refers to a generalized gradient or Clarke's gradient
and $D_x^* u = \{p\in \mathbb{R}^{n}:p=\lim D_xu(x_n), x_n\to x\}$.
For details about super-, sub-differential, and  Clarke's gradient, see \cite{bardi1997optimal}[Chapter II.4.1].}
Let us define a function $\psi:\mathbb{R}^{n} \to \mathbb{R}^{n}$ satisfying
\begin{equation}\label{eq:psi}
\psi(x) = \left\{
\begin{aligned}
    &D_x\tilde{Q} (x) \quad &\text{if $\tilde{Q}$ is differentiable at $x\in \mathbb{R}^{n}$},\\
    & \text{some } p \in D_x^+ \tilde{Q}(x) \cap D_x^* \tilde{Q}(x)  \quad &\text{otherwise}.
\end{aligned}
\right.
\end{equation}

For a given state $x(s)$, we choose a action 
$a(s) = \alpha(x(s), -\psi(x(s)))$.
For the moment, let us assume that we can choose $p\in D^+\tilde{Q}(x)\cap D_x^* \tilde{Q}(x) \cap D_x^* \tilde{Q}(x)$ so that $\psi(x)$ is a Borel function. Then clearly, we have  $a(\cdot) \in \mathcal{A}$. The details of the proof can be found in Appendix \ref{subsec:borel_psi}.   
Then from  Assumption \ref{ass:C2_assumption} and the definition of $\psi$, we have 
\begin{align*}
   &f(x(s), a(s))  \cdot \psi(x(s))    -r(x(s), a(s) ) \\
   & \quad =  -   \left( f(x(s), a(s))  \cdot (-\psi(x(s)) )   + r(x(s), a(s) ) \right)\\
   & \quad = - \max_{a\in \mathbb{R}^{m}} (f(x(s), a ) \cdot (-\psi(x(s)))+r(x(s), a))\\
   & \quad = \min_{a\in \mathbb{R}^{m}} (f(x(s), a ) \cdot \psi(x(s)) - r(x(s), a)).
\end{align*}
Since $\psi(x) \in D^+_x\tilde{Q}(x)$, the assumption of~\cite{bardi1997optimal}[Theorem 2.52, Chapter III] is satisfied.\footnote{Although the action spaces are non-compact set, with the same argument, the results of \cite{bardi1997optimal}[Theorem 2.52, Chapter III] still hold.
} 
Therefore, we have
$$
\tilde{Q}(x) = \int_{0}^{\infty }e^{-\gamma s }(-r(x(s),a(s)) \mathrm{d}s,
$$
which implies
$$
Q(x) = \int_{0}^{\infty }e^{-\gamma s }r(x(s),a(s)) \mathrm{d}s.
$$
This completes the proof.
\end{proof}
In the following proposition, we show the existence of optimal control for $Q^L$. 
Some of the arguments 
 in the proof overlap with the previous proposition, but we shall provide details for the convenience of the reader.
\begin{proposition}\label{prop:existence}
Assume further that $\gamma > 2\|f\|_{\text{Lip}(\mathbb{R}^n \times \mathbb{R}^m)}$ and let $(x,a)\in\mathbb{R}^n \times \mathbb{R}^m$ be given. Then, there exists $a_L(s) \in \mathcal{A}^L$ from~\eqref{eq:lipcontrol} such that
\begin{equation}\label{eq:L_optimal_reward}
Q^L(x,a)=\int_0^\infty e^{-\gamma s } r(x_L(s),a_L(s)) \mathrm{d}s,
\end{equation}
where
\begin{equation}\label{eq:L_optimal_dynamic}
\begin{cases}
x_L'(s) = f(x_L(s),a_L(s))\quad\text{for}\quad s>0,\\
x_L(0)=x,\\
a_L(0)=a.
\end{cases}
\end{equation}
\end{proposition}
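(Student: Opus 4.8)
The plan is to recast $Q^L$ as the value function of a \emph{classical} infinite-horizon discounted control problem with a \emph{compact} control set and then run the synthesis argument of Proposition~\ref{prop:existence_Q} on this augmented problem. Following the reformulation in Lemma~\ref{lem:Lip_regualrity}, I introduce the augmented state $z=(x,a)\in\mathbb{R}^{n+m}$, the control $b(\cdot)\in\mathcal{B}^L$ taking values in the compact ball $B_{L,m}(0)=\{b\in\mathbb{R}^m:\|b\|_2\le L\}$, the augmented dynamics $g(z,b):=(f(x,a),b)$, and the running reward $\rho(z):=r(x,a)$, which does not depend on the control. With these choices the maximized Hamiltonian is
\[
\gamma Q^L-\sup_{b\in B_{L,m}(0)}\bigl(D_zQ^L\cdot g(z,b)\bigr)-\rho(z)=\gamma Q^L-D_xQ^L\cdot f-L\|D_aQ^L\|_2-r,
\]
so $Q^L$ is exactly the value function of this problem, and the pointwise bound $\|b\|_2\le L$ forces the resulting $a$-trajectory to be $L$-Lipschitz, i.e. $a_L\in\mathcal{A}^L$ automatically. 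The structural gain over Proposition~\ref{prop:existence_Q} is that the new control set $B_{L,m}(0)$ is genuinely compact, so \cite{bardi1997optimal}[Theorem 2.52, Chapter III] applies directly, without the non-compactness remark needed there.

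Next I would reproduce the semiconcavity step. Setting $\tilde Q^L:=-Q^L$, it solves $\tilde Q^L+\tilde H^L_\gamma(z,D_z\tilde Q^L)=0$ with
\[
\tilde H^L_\gamma\bigl(z,(p,q)\bigr):=\frac{1}{\gamma}\bigl(-f(x,a)\cdot p+L\|q\|_2+r(x,a)\bigr),
\]
where $(p,q)\in\mathbb{R}^n\times\mathbb{R}^m$ is dual to $(x,a)$. As in Proposition~\ref{prop:existence_Q}, $\tilde H^L_\gamma$ is Lipschitz in $(p,q)$ and, on the relevant range $\|(p,q)\|_2\le 3\|Q^L\|_{\text{Lip}(\mathbb{R}^n\times\mathbb{R}^m)}$ (bounded uniformly in $L$ by Lemma~\ref{lem:Lip_regualrity}), I would verify the second-difference estimate required by \cite{bardi1997optimal}[Theorem 4.9, Chapter II]. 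The decisive point is that the new term $L\|q\|_2$ is \emph{convex} in $q$, so by the triangle inequality
\[
L\|q+\tilde Ch_a\|_2-2L\|q\|_2+L\|q-\tilde Ch_a\|_2\ge 0,
\]
i.e. it contributes nonnegatively to the second difference and cannot destroy semiconcavity; the remaining $f$- and $r$-contributions are handled exactly as in \eqref{eq:measure_H_gama_est} by Assumption~\ref{ass:C2_assumption} (now read in the full variable $(x,a)$), where $\gamma>2\|f\|_{\text{Lip}(\mathbb{R}^n\times\mathbb{R}^m)}$ is used to absorb the cross term $\tfrac{2\tilde C}{\gamma}\|f\|_{\text{Lip}}\|h\|_2^2$ into $\tilde C\|h\|_2^2$. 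Hence $\tilde Q^L$ is semiconcave in $z$.

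Semiconcavity then gives, via \cite{bardi1997optimal}[Proposition 4.7, Chapter II], that $D^+_z\tilde Q^L(z)=\partial_z\tilde Q^L(z)\supset D^*_z\tilde Q^L(z)\ne\emptyset$ for every $z$. I would define a Borel feedback $\Psi(z)\in D^+_z\tilde Q^L(z)\cap D^*_z\tilde Q^L(z)$ as in \eqref{eq:psi}, take the control to be the Hamiltonian maximizer $b_L(s)=\beta\bigl(z_L(s),-\Psi(z_L(s))\bigr)$, where $\beta(z,(p,q))=Lq/\|q\|_2$ for $q\ne0$ (and any fixed value, say $0$, when $q=0$) is the argmax of $b\mapsto g(z,b)\cdot(p,q)$ over $\|b\|_2\le L$, and let $z_L=(x_L,a_L)$ solve the closed-loop system $z_L'=g(z_L,b_L)$ with $z_L(0)=(x,a)$. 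Since $\|b_L(s)\|_2\le L$ we indeed have $a_L\in\mathcal{A}^L$. Because $\Psi(z_L(s))\in D^+_z\tilde Q^L(z_L(s))$ along the trajectory, the hypotheses of \cite{bardi1997optimal}[Theorem 2.52, Chapter III] are met, and the theorem yields $\tilde Q^L(x,a)=\int_0^\infty e^{-\gamma s}\bigl(-r(x_L(s),a_L(s))\bigr)\,\mathrm{d}s$, which is the desired identity $Q^L(x,a)=\int_0^\infty e^{-\gamma s}r(x_L(s),a_L(s))\,\mathrm{d}s$.

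I expect the main obstacle to be, as in Proposition~\ref{prop:existence_Q}, the combination of the augmented semiconcavity estimate with the measurable selection of the feedback. For the former, one must read Assumption~\ref{ass:C2_assumption} in the full variable $(x,a)$ and check that the non-smooth term $L\|q\|_2$ does not spoil the estimate; the convexity observation above is what makes this work. For the latter, one must choose $\Psi$ (hence $b_L$) to be Borel so that $b_L(\cdot)\in\mathcal{A}$ and the closed-loop ODE for $x_L$ is well-posed, exactly as in Appendix~\ref{subsec:borel_psi}. By contrast, the verification step is easier here than in Proposition~\ref{prop:existence_Q} precisely because $B_{L,m}(0)$ is compact, so the genuinely new content is confined to the extra norm term in the Hamiltonian.
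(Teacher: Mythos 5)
Your augmented-state reformulation, the convexity observation that the term $L\|q\|_2$ can only help the second-difference estimate, and the explicit maximizer $b=Lq/\|q\|_2$ all match the first half of the paper's proof, which likewise works with $z=(x,a)$, $G(z,b)=(f(z),b)$, $\|b\|_2\le L$, and proves semiconcavity of $\tilde Q^L=-Q^L$ via \cite{bardi1997optimal}[Theorem 4.9, Chapter II]. But there is a genuine gap in how you obtain the second-difference estimate for the Hamiltonian. Proposition~\ref{prop:existence} assumes only Assumption~\ref{ass:naive} (bounded, Lipschitz $f$ and $r$) together with $\gamma>2\|f\|_{\text{Lip}(\mathbb{R}^n\times\mathbb{R}^m)}$; it does \emph{not} assume Assumption~\ref{ass:C2_assumption}, and even if it did, that assumption controls second differences only in the $x$-direction, whereas your augmented state requires them in the full variable $z=(x,a)$. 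Under mere Lipschitz regularity the estimate fails outright: taking $r(x,a)=-\|a\|_2$ gives $r(z+h)-2r(z)+r(z-h)=-2\|h\|_2$ at $a=0$, which is not bounded below by $-C\|h\|_2^2$, so $\tilde Q^L$ need not be semiconcave and the entire synthesis (superdifferential feedback, \cite{bardi1997optimal}[Theorem 2.52, Chapter III]) collapses. In short, your proof establishes the proposition only for data satisfying a two-sided $C^{1,1}$-type condition in $(x,a)$, which is strictly less than what is claimed.

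The paper closes exactly this gap with a two-step argument that your proposal lacks. First it proves the result under the additional hypothesis \eqref{eq:f_r_c2} that $f,r\in C^2$ with bounded $C^2$ norms, where the second-difference estimate \eqref{eq:H_gama_est} follows from the mean value theorem rather than from any structural assumption (and the smooth-case existence is then delegated to \cite{kim2021hamilton}[Theorem 1], rather than re-running the feedback synthesis as you do — your route is a viable alternative there, modulo the Borel selection of Appendix~\ref{subsec:borel_psi}). Second, it removes the smoothness by mollifying $f,r$ into $f^\varepsilon,r^\varepsilon$, checking $\|f^\varepsilon\|_{\text{Lip}}\le\|f\|_{\text{Lip}}$ so the gap condition $\gamma>2\|f^\varepsilon\|_{\text{Lip}}$ persists, extracting via Arzel\'a--Ascoli a locally uniform limit $(\hat x_L,\hat a_L)$ of the mollified optimal pairs (the uniform Lipschitz bounds come from $\|f\|_\infty$ and $\|b\|_2\le L$), and finally identifying $\hat Q^L(x,a):=\int_0^\infty e^{-\gamma s}r(\hat x_L(s),\hat a_L(s))\,\mathrm{d}s$ with $Q^L$ through the stability of viscosity solutions \cite{bardi1997optimal}[Proposition 2.2, Chapter II] and uniqueness for \eqref{eq:ql}. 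Without this approximation-and-stability step, or some substitute for it, your argument does not prove the proposition as stated.
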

\begin{proof}
We first prove the theorem under the assumption 
\begin{equation}\label{eq:f_r_c2}
    f(x,a), r(x,a) \in C^2(\mathbb{R}^{n}\times \mathbb{R}^{m})\quad \text{and} \quad 
    \|f\|_{C^2(\mathbb{R}^{n} \times \mathbb{R}^{m} )}+\|r\|_{C^2(\mathbb{R}^{n} \times \mathbb{R}^{m} )} \quad \text{is bounded}.
\end{equation}
Applying Lemma \ref{lem:Lip_regualrity}, we have $Q(x, a)$ is bounded and Lipscthiz continuous in $\mathbb{R}^n \times \mathbb{R}^m$. We introduce extended state variable $z(s)$ and dynamics $G:(\mathbb{R}^{n}\times \mathbb{R}^{m})\times \mathbb{R}^{m} \rightarrow \mathbb{R}^{n}\times \mathbb{R}^{m}$ satisfying 
$$
z(s) = (x_L(s), a_L(s) ) \quad \text{and} \quad  G(z,b) = (f(z), b).
$$
Then \eqref{eq:L_optimal_dynamic} can be written as 
\begin{equation*}
\begin{cases}
z'(s) &= G(z(s), b(s) )\quad\text{for}\quad s>0,\\
z(0)          &= (x,a).
\end{cases}
\end{equation*}
Rewriting \eqref{eq:ql}, in terms of $z(t)$ and $G(z,b)$ and $\tilde Q^L(x,a):=-Q^L(x,a)$, we have
\[
\tilde Q^L(x,a)+H_\gamma(z,D_z \tilde Q^L(z))=0,
\]
where
\[
H_\gamma (z,p):= \sup_{\|b\|_2\leq L} \left(-\frac{G(z,b)}{\gamma}\cdot p + \frac{r(z)}{\gamma} \right).
\]
We will show that $\tilde Q^L(z)$ is semiconcave.

It is clear that
\[
|H_\gamma(z,p)-H_\gamma(z,q)| \leq C \|p-q\|_2,
\]
for some $C>0$. 
We claim that there exists $\tilde{C}>0$ such that
\begin{equation}\label{eq:H_gama_est}
H_\gamma(z+h,p+\tilde{C}h)-2H_\gamma(z,p)+H_\gamma(z-h,p-\tilde{C}h) \geq -\tilde{C} \|h\|_2^2
\end{equation}
for all  $z,h\in \mathbb{R}^{n}\times\mathbb{R}^{m}$  and $p\in \mathbb{R}^{n} \times \mathbb{R}^{m}$  satisfying $\|p \|_{2} \leq 3 \|Q^L(x,a)\|_{\text{Lip}(\mathbb{R}^{n} \times \mathbb{R}^{m})}$. For justification, let us first observe that
\begin{align*}
&H_\gamma(z+h,p+\tilde{C}h)+H_\gamma(x-h,p-\tilde{C}h)\\
&
\geq  \sup_{\|b\|_2\leq L} 
\left(-\frac{G(z+h,b)}{\gamma}\cdot (p+\tilde{C}h)
- \frac{G(z-h,b)}{\gamma}\cdot (p-\tilde{C}h)
+\frac{r(z+h)}{\gamma} 
+\frac{r(z-h)}{\gamma} 
\right).\\
&=\sup_{\|b\|_2\leq L} 
\left(-\frac{G(z+h,b)+G(z-h,b)-2G(z,b)}{\gamma}\cdot p 
+\frac{r(z+h)+r(z-h)-2r(z)}{\gamma}\right.\\ 
&\qquad\qquad\qquad\qquad  \left. -\frac{G(z+h,b)-G(z-h,b)}{\gamma}\cdot  (\tilde{C}h )
-2\frac{G(z,b)}{\gamma }\cdot p
+2\frac{r(z)}{\gamma} \right)=:I.
\end{align*}
By the mean value theorem, we find 
\begin{align*}
       \frac{\|G(z+h, b) -2G(z,b)+G(z-h,b) \|_{2}}{\gamma}   &\leq   \frac{C\|f\|_{C^2(\mathbb{R}^{n}\times \mathbb{R}^{m})}\|h\|_{2}^2  }{\gamma },\\
     \frac{|r(z+h)-2r(z)+r(z-h)|}{\gamma} &\leq  \frac{C\|r\|_{C^2(\mathbb{R}^{n}\times \mathbb{R}^{m})}\|h\|_2^2  }{\gamma },\\
\frac{\|G(z+h)-G(z-h)\|_{2}}{\gamma} &\leq   \frac{2\|f\|_{ \text{Lip} (\mathbb{R}^n\times \mathbb{R}^m) } \|h\|_2}{\gamma}.
\end{align*}
From the inequalities above, we estimate $I$ by
\begin{align*}
I &\geq  -\left(\frac{C_1}{\gamma}(\|f\|_{C^2(\mathbb{R}^{n}\times \mathbb{R}^{m})}\|Q^L(x,a)\|_{\text{Lip}(\mathbb{R}^{n} \times \mathbb{R}^{m})}+\|r\|_{C^2(\mathbb{R}^{n}\times \mathbb{R}^{m})} )+\frac{2\tilde{C}}{\gamma} \|f\|_{\text{Lip}(\mathbb{R}^{n}\times \mathbb{R}^{m} )} \right)\|h\|_2^2 \\
&\qquad +2 H_\gamma(z,p).
\end{align*}
By choosing $\tilde{C}$ large enough to satisfy
\[
\tilde{C} > \frac{C_1(\|f\|_{C^2(\mathbb{R}^{n}\times \mathbb{R}^{m})}\|Q^L(x,a)\|_{\text{Lip}(\mathbb{R}^{n} \times \mathbb{R}^{m})}+\|r\|_{C^2(\mathbb{R}^{n}\times \mathbb{R}^{m})} ) }{\gamma -2 \|f\|_{\text{Lip}(\mathbb{R}^{n}\times \mathbb{R}^{m} )} },
\]
the inequality \eqref{eq:H_gama_est} holds. 
Note that such $\tilde{C}$ can be choosen since $\gamma>2\|f\|_{\text{Lip}
    (\mathbb{R}^n\times \mathbb{R}^m) }$.
Recalling~\cite{bardi1997optimal}[Theorem 4.9, Chapter II],  $\tilde{Q}^L$ is semiconcave on $\mathbb{R}^n$. By \cite{bardi1997optimal}[Proposition 4.7, Chapter II], we have $D^{+}_z \tilde{Q}^L (z)=\partial_z \tilde{Q}^L(z)$ for all $z\in \mathbb{R}^{n}\times \mathbb{R}^{m}$.
Finally, by~\cite{kim2021hamilton}[Theorem 1], there exists an optimal control $a_L(\cdot) \in \mathcal{A}^L$ and a corresponding state $x_L(s)$ that satisfy \eqref{eq:L_optimal_reward} and \eqref{eq:L_optimal_dynamic}.

Next, we remove the assumption \eqref{eq:f_r_c2}. For all $\varepsilon>0$, let $\tilde{\eta}^{\varepsilon}$ be a mollifier, as defined in \eqref{eq:mollifier}, but with the domain $\mathbb{R}^{n}$ replaced by $\mathbb{R}^{n} \times \mathbb{R}^{m}$. By defining mollified functions 
\[
f_i^\varepsilon(z):= f_i * \tilde{\eta}^{\varepsilon}(z) 
\quad 
\text{and}
\quad
r_i^\varepsilon(z) := r_i\ast \tilde{\eta}^{\varepsilon}(z),
\]
where the subscript $i$ denotes the $i$th component.

From \cite{evans2022partial}[Theorem 6, Appendix C.4], we have $f^\varepsilon(x,a), r^\varepsilon(x,a)\in C^2(\mathbb{R}^{n}\times\mathbb{R}^{m})$ and
\[
f^\varepsilon(z) \to f(z),\quad r^{\varepsilon}(z) \to r(z) 
\quad \text{locally uniformly}.
\]
For all $z_1, z_2\in \mathbb{R}^{n}\times \mathbb{R}^{m}$, we find
\begin{align*}
    \|f^{\varepsilon}(z_1)-f^{\varepsilon}(z_2)\|_{2}&\leq 
    \int_{\mathbb{R}^{n}\times \mathbb{R}^{m} }
    \|(f(z_1-z)-f(z_2-z)) \tilde{\eta}^{\varepsilon}(z)\|_{2}\, dz \\
    &\leq \|f\|_{
    \text{Lip}{(\mathbb{R}^n\times \mathbb{R}^m)} }
    |z_1-z_2| \int_{\{\|z\|_{2} \leq \varepsilon\} } \tilde{\eta}^{\varepsilon} (z)\, dz \\
    & \leq \|f\|_{
    \text{Lip}{(\mathbb{R}^n\times \mathbb{R}^m)} }
    |z_1-z_2|.
\end{align*}
Therefore,  $\|f^{\varepsilon}\|_{
    \text{Lip}{(\mathbb{R}^n\times \mathbb{R}^m)} }\leq \|f\|_{
    \text{Lip}{(\mathbb{R}^n\times \mathbb{R}^m)} }$ and   as a result, 
$\gamma>2\|f^{\varepsilon}\|_{\text{Lip} (\mathbb{R}^n\times \mathbb{R}^m) }$
    holds for all $\varepsilon>0$. 
As a direct consequence of the previous argument, there exist  $x^\varepsilon_L(s)$ and $a^\varepsilon_L(\cdot)\in \mathcal{A}_L$ satisfying  \eqref{eq:L_optimal_reward} and \eqref{eq:L_optimal_dynamic} replacing $f, r$ and $Q^{L}$  by $f^\varepsilon, r^\varepsilon$ and $Q^{L,\varepsilon}$, respectively.

For all $s\in [0, s_0]$ with  any fixed $s_0 >0$, we have the following estimates:
\begin{align*}
& \|x_L^{\varepsilon}(s)\| +
\left\|\frac{d x_L^{\varepsilon}(s)}{ds}\right\|_{2} + \|a_L^{\varepsilon}(s)\|_{2} +\left\|\frac{d a_L^{\varepsilon}(s)}{ds}\right\|_{2}\\
&\leq \|x\|_{2} 
+
\int_{0}^{s_0} 
\left\|\frac{dx_L^{\varepsilon}(s)}{ds}\right\|_{2} \mathrm{d}s + \left\|\frac{dx_L^{\varepsilon}(s)}{ds}\right\|_{2} + \|a\|_{2} 
+
\int_{0}^{s_0} 
\left\|\frac{d a_L^{\varepsilon}(s)}{ds}\right\|_{2} \mathrm{d}s + \left\|\frac{d a_L^{\varepsilon}(s)}{ds}\right\|_{2}\\
&\leq \|x\|_{2}
+
(s_0+1)\frac{\gamma}{2} + \|a\|_{2}+ (s_0+1)L.
\end{align*}

Given $n\in\mathbb{N}$, we can find a sequence $\{\varepsilon^n_k\}_{k=1}^\infty \searrow 0$ such that
\[
a^{\varepsilon_k^n}_{L} \rightarrow \hat{a}^n_{L} 
\quad \text{and} \quad 
x^{\varepsilon_k^n}_{L} \rightarrow \hat{x}^n_{L} \quad \text{locally uniformly}\quad \text{on}\quad [0,n]\quad \text{as}\quad k\rightarrow \infty,
\]
by Arzel\'{a}–Ascoli theorem. By taking a further subsequence, we may assume that
\[
a^{\varepsilon_k}_{L} \rightarrow \hat{a}_{L} 
\quad \text{and} \quad 
x^{\varepsilon_k}_{L} \rightarrow \hat{x}_{L} \quad \text{locally uniformly}\quad \text{on}\quad [0,\infty)\quad \text{as}\quad k\rightarrow \infty.
\]
Clearly, $\hat a_L$ is $L$-Lipschitz continuous on $[0,\infty)$. From 
\[
x_L^\varepsilon(t)=\int_0^t f(\hat x_L^\varepsilon(s), \hat a_L^\varepsilon(s))ds+x,
\]
and the local uniform convergence, we also have that
\[
\hat x_L (t) = \int_0^t f(\hat x_L(s),\hat a_L(s))\mathrm{d}s+x.
\]

Finally, we will show that 
\[
Q^L(x,a)=\int_0^\infty e^{-\gamma s } r(\hat x_L(s),\hat a_L(s)) \mathrm{d}s
\]
using the stability property of the viscosity solution. To this end, let us define  $\hat{Q}^{L} (x,a)$ as 
\[
\hat{Q}^{L} (x,a) := \int_0^\infty e^{-\gamma s }r(\hat{x}_{L}(s), \hat{a}_{L}(s)) \, \mathrm{d}s.
\]
and $g_L(x,a,t): (\mathbb{R}^{n}\times \mathbb{R}^{m}) \times [0,\infty) \rightarrow \mathbb{R}^{n}\times \mathbb{R}^{m}$ 
satisfy $g_L(x,a,t)  = (\tilde{x}_L(t), \tilde{a}_L(t))$, where 
$\tilde{x}(t)$ and $\tilde{a}(t)$ satisfies 
\begin{equation*}
\begin{cases}
{\tilde x_L'(s)}= f(\tilde x_L(s),\tilde a_L(s)),\\
\tilde x_L(0)=x,\\
\tilde a_L(0)=a,
\end{cases}
\end{equation*}
together with $\|\dot {\tilde {a}}(s)\|_2 \leq L$ for all $s>0$.

For any compact set $K\subset \mathbb{R}^{n} \times \mathbb{R}^{m}$ and $s_0>0$ to  be choose later, we define $U_L:=g_L(K, [0,s_0])$. 
Then for all $(x,a)\in K$, we find that
\begin{align*}
    |Q^{L,\varepsilon_k}(x,a) -  \hat{Q}^{L}(x,a) |
    &\leq \int_0^\infty e^{-\gamma s }|r^{\varepsilon_k} (x_{L}^{\varepsilon_k}(s),a_{L}^{\varepsilon_k}(s)) -r (\hat{x}_{L}(s),\hat{a}_{L}(s))| \, d\mathrm{s}\\
    &\leq \int_0^{s_0} e^{-\gamma s }|r^{\varepsilon_k} (x_{L}^{\varepsilon_k}(s),a_{L}^{\varepsilon_k}(s)) -r (\hat{x}_{L}(s),\hat{a}_{L}(s))| \, d\mathrm{s} 
    +  C e^{-\gamma s_0 } \\
    &\leq \int_0^{s_0} e^{-\gamma s }|r^{\varepsilon_k} (x_{L}^{\varepsilon_k}(s),a_{L}^{\varepsilon_k}(s)) -r^{\varepsilon_k}(\hat{x}_{L}(s),\hat{a}_{L}(s))| \, d\mathrm{s} \\
    &\quad   +
    \int_0^{s_0} e^{-\gamma s }|r^{\varepsilon_k} (\hat{x}_{L}(s),\hat{a}_{L}(s)) -r (\hat{x}_{L}(s),\hat{a}_{L}(s))| \, d\mathrm{s} 
    +  C e^{-\gamma s_0 }  \\
    &\leq C_1 |s_0|  \left(\|x^{\varepsilon_k} -\hat{x}\|_{L^{\infty}([0,s_0]) }+\|a^{\varepsilon_k} -\hat{a}\|_{L^{\infty}([0,s_0]) }\right)  \\
    &\quad + |s_0|\|r^{\varepsilon_k} -r\|_{L^{\infty}(U_L) } +  C_2 e^{-\gamma s_0 },
\end{align*}
where the last inequality comes from 
\[
\|f^{\varepsilon}\|_{
    \text{Lip}{(\mathbb{R}^n\times \mathbb{R}^m)} }\leq \|f\|_{
    \text{Lip}{(\mathbb{R}^n\times \mathbb{R}^m)} }
\quad \text{and} \quad
\|r^{\varepsilon}\|_{
    \text{Lip}{(\mathbb{R}^n\times \mathbb{R}^m)} }\leq \|r\|_{
    \text{Lip}{(\mathbb{R}^n\times \mathbb{R}^m)} }.
\]
For small $\delta>0$ given,  we choose $s_0>0$ such that 
\[
C_2e^{-\gamma s_0 } <\frac{\delta}{2}.
\]
Then $U$ is determined and we choose $\varepsilon_k>0$ sufficiently small to satisfy 
\begin{align*}
     C_1|s_0|  \left(\|x^{\varepsilon_k} -\hat{x}\|_{L^{\infty}([0,s_0]) }+\|a^{\varepsilon_k} -\hat{a}\|_{L^{\infty}([0,s_0]) }\right)   + |s_0|\|r^{\varepsilon_k} -r\|_{L^{\infty}(U_L) } < \frac{\delta}{2}.
\end{align*}
This implies that $Q^{L,\varepsilon_k}(x,a)$ converges to $\hat{Q}^{L}(x,a)$ uniformly on compact sets. 
Moreover, $Q^{L,\varepsilon}$ is the unique viscosity solution of 
$$
\gamma Q^{L,\varepsilon} - f^{\varepsilon} (x,a)\cdot D_xQ^{L, \varepsilon} -L \|D_a Q^{L, \varepsilon}\|_{2} -r^{\varepsilon}(x,a)= 0 .
$$
Then, by the stability of the viscosity solution \cite{bardi1997optimal}[Proposition 2.2, Chapter II], we find that \(\hat{Q}^{L}(x,a)\) is the viscosity solution of \eqref{eq:ql}. Consequently, \(\hat{Q}^L = Q^L\). Since \(\hat{a}(\cdot) \in \mathcal{A}^{L}\) and \(\hat{x}(\cdot)\), \(\hat{a}(\cdot)\) satisfy the dynamic equation \eqref{eq:L_optimal_dynamic}, it follows that \(\hat{a}^L(\cdot)\) is the desired optimal policy. This completes the proof.
\end{proof}
We now get back to our original interest, the rate of convergence of $Q^L$ to $Q$ as $L\rightarrow \infty$. To understand such a quantitative property, we introduce structural assumptions on the dynamics and reward function.    

\begin{assumption}\label{ass:approx2}
There exists $C>0$ such that
\[
\|f(x,a)\|_2+|r(x,a)| \leq \frac{C}{\|a\|_2^\sigma}
\]
for some $\sigma>0$.
\end{assumption}
The following assumption indicates that an optimal control from Proposition~\ref{prop:existence_Q} is not changing abruptly. 

\begin{assumption}\label{ass:control}
Given $x\in\mathbb{R}^n$, let $a(\cdot) \in \mathcal{A}$ be an optimal control obtained in Proposition~\ref{prop:existence_Q}, that is,
\[
Q(x) =  \int_0^\infty e^{-\gamma s} r(x(s),a(s)) \mathrm{d}s.
\]
For this control, we assume that there exists $\beta>0$,
\[
\|\tilde a(s)-a^\varepsilon(s)\|_{L^2(\mathbb{R})} \leq O(\varepsilon^\beta).
\]
where $a^\varepsilon (s)  = \tilde a(s) * \eta^\varepsilon$ and $\tilde a(s)$ is odd extension of $a(s)$ \footnote{The measurable function $a:[0,\infty)\rightarrow \mathbb{R}^m$ can be extended to $\tilde a:(-\infty,\infty)\rightarrow \mathbb{R}^m$ in a way that $a(-s)=-a(s)$ and mollified with $\eta_\varepsilon$ with $n=1$ in~\eqref{eq:mollifier}.}.
\begin{remark}
If $a(\cdot)$ is piecewise continous and $|a(\cdot)| \leq M$ for some $M>0$, then $\beta=1$. In general if $\tilde a \in W^{k,2}(\mathbb{R})$ for some $k \geq 1$, then we can choose $\beta=k$.
\end{remark}
\end{assumption}

We now state the result on a rate of convergence of $Q^L \rightarrow Q$. The idea of the proof is that we truncate the optimal control $a(s)$ and consider the mollification of this function to relate it with the feasible class $\mathcal{A}^L$.
\begin{theorem}
Let $\gamma \geq \|f\|_{\text{Lip}(\mathbb{R}^n\times \mathbb{R}^m)}+1$ and $(x,a)\in\mathbb{R}^n\times \mathbb{R}^m$ be given. Under Assumption~\ref{ass:control}, we have that
\[
|Q^L(x,a) - Q(x)| = O\bigg(\frac{\log L}{L^{\sigma/2}}+(\frac{\log L}{\sqrt{L}})^{\beta}+\frac{1}{\sqrt{L}}\bigg).
\]
\end{theorem}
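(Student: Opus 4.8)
The plan is to exploit the one-sided bound $Q^L(x,a)\le Q(x)$ established in the proof of Theorem~\ref{thm:conv}, so that $|Q^L(x,a)-Q(x)| = Q(x)-Q^L(x,a)$ and it suffices to construct a \emph{single} admissible control in $\mathcal{A}^L$ whose discounted reward is within the claimed error of $Q(x)$. First I would remove the initial-condition constraint: since \eqref{eq:lipschitz_est} gives $\|D_aQ^L\|_2\le C/L$, the map $a\mapsto Q^L(x,a)$ is $\tfrac{C}{L}$-Lipschitz, so $|Q^L(x,a)-Q^L(x,0)|\le \tfrac{C}{L}\|a\|_2=O(L^{-1/2})$, which contributes the third term. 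It then remains to bound $Q(x)-Q^L(x,0)$, where the competitor control may be taken to start at $0$.

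Next I would take the optimal control $a(\cdot)\in\mathcal{A}$ for $Q(x)$ furnished by Proposition~\ref{prop:existence_Q}, with trajectory $x(\cdot)$, and manufacture a competitor $a_L\in\mathcal{A}^L$ with $a_L(0)=0$ in two steps. For the \emph{truncation}, set $\hat a:=P_{B_{M,m}(0)}\circ a$, the pointwise metric projection onto the ball of radius $M$; this is $1$-Lipschitz in the value, bounded by $M$, and, crucially, on the bad set $\{\|a(s)\|_2>M\}$ it has norm exactly $M$, so Assumption~\ref{ass:approx2} bounds \emph{both} $|r(x,a)|,\|f(x,a)\|_2$ and $|r(x,\hat a)|,\|f(x,\hat a)\|_2$ by $C/M^\sigma$. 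Hence replacing $a$ by $\hat a$ perturbs $r$ and $f$ pointwise by at most $2C/M^\sigma$ on the bad set and not at all elsewhere. For the \emph{mollification}, set $a_L:=(\hat a)^\varepsilon$ using the odd extension and $\eta^\varepsilon$, so that $a_L(0)=0$ automatically; since $\|\hat a\|_\infty\le M$ one has $\|(a_L)'\|_\infty\le CM/\varepsilon$, so choosing $M\sim L\varepsilon$ makes $a_L$ genuinely $L$-Lipschitz. Because the projection is $1$-Lipschitz it does not increase the $L^2$ modulus of continuity, so Assumption~\ref{ass:control} still yields $\|\hat a-a_L\|_{L^2}=O(\varepsilon^\beta)$.

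I would then write $Q(x)-J(a_L)=\int_0^\infty e^{-\gamma s}\big(r(x(s),a(s))-r(x_L(s),a_L(s))\big)\,ds$, where $x_L(\cdot)$ is the trajectory under $a_L$, and split the integrand through the intermediate values $r(x,\hat a)$ and $r(x,a_L)$ into a truncation term, a mollification term, and a state-deviation term. The first is controlled pointwise by $2C\,M^{-\sigma}\mathbf 1_{\{\|a\|_2>M\}}$ and integrates to $O(M^{-\sigma})$; the second is $\le\|r\|_{\text{Lip}}\|\hat a-a_L\|_2$ and integrates, by Cauchy--Schwarz against $e^{-\gamma s}$, to $O(\varepsilon^\beta)$. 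For the third I would bound $\tfrac{d}{ds}\|x-x_L\|_2$ by $\gamma_0\|x-x_L\|_2+2C\,M^{-\sigma}\mathbf 1_{\{\|a\|_2>M\}}+\gamma_0\|\hat a-a_L\|_2$ with $\gamma_0:=\|f\|_{\text{Lip}}$, apply Gronwall, and integrate; since $\gamma\ge\gamma_0+1>\gamma_0$ the Fubini computation collapses the weight and gives $O(M^{-\sigma}+\varepsilon^\beta)$. Choosing $\varepsilon\sim\log L/\sqrt L$ and $M\sim L\varepsilon\sim\sqrt L\log L$ makes $a_L$ admissible and produces $O(M^{-\sigma})\le\log L\,L^{-\sigma/2}$ and $O(\varepsilon^\beta)=(\log L/\sqrt L)^\beta$; together with the $O(L^{-1/2})$ from the initial reduction (and, if one prefers, an $e^{-\gamma T}=O(L^{-1/2})$ tail from truncating time at $T\sim\log L$ rather than relying on the discount) this is exactly the stated rate.

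The main obstacle is the truncation step, because the optimal control from Proposition~\ref{prop:existence_Q} need not be bounded nor of bounded variation, so one cannot mollify it into $\mathcal{A}^L$ directly, and a crude magnitude cutoff would change $f$ and $r$ uncontrollably on the bad set. Assumption~\ref{ass:approx2} is precisely what makes the cutoff cheap, but only if one projects onto the ball so that the cut value still has norm $M$ (rather than damping toward $0$), and one must estimate the trajectory drift through $\|f(\cdot,a)-f(\cdot,\hat a)\|_2$ rather than through $\|a-\hat a\|_2$, since the latter is large exactly where the former is small. Verifying that the projection preserves the $O(\varepsilon^\beta)$ mollification bound and that the two small parameters can be tuned simultaneously to meet both the feasibility constraint $M\lesssim L\varepsilon$ and the error budget is the remaining technical work.
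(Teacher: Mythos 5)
Your proposal follows essentially the same route as the paper's proof: bound $Q(x)-Q^L(x,a)$ by comparing against a competitor built from the optimal unconstrained control of Proposition~\ref{prop:existence_Q} via a radius-$R$ truncation (the paper clips componentwise, you project onto a ball --- the same effect under Assumption~\ref{ass:approx2}, including the key point that the truncated value retains norm $\geq R$ so both $f$ and $r$ stay small on the bad set), followed by odd-extension mollification to land in $\mathcal{A}^L$ with $M\sim L\varepsilon$, a Gronwall estimate on the state deviation exploiting $\gamma\geq\|f\|_{\text{Lip}(\mathbb{R}^n\times\mathbb{R}^m)}+1$, and an analogous parameter tuning yielding the stated rate. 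Your explicit treatment of the initial-action constraint via the bound $\|D_aQ^L\|_2\leq C/L$ from \eqref{eq:lipschitz_est} is a small refinement the paper leaves implicit (its competitor's initial condition is glossed over), and your flagged concern about the projected control preserving the $O(\varepsilon^\beta)$ mollification bound is handled no more rigorously in the paper itself, so the argument is in substance the paper's.
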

\begin{proof}
Throughout the proof, we will use the mollifier $\eta_\varepsilon$ defined for $n=1$ in~\eqref{eq:mollifier}. Let $(x(s),a(s))$ and $(x_L(s),a_L(s))$ be optimal state trajectory and control pairs associated with $Q(x)$ and $Q^L(x,a)$ respectively. 

Given $R>0$ and $a(s)=(a_1(s),...,a_m(s))$, define
\[
a_{i}^R(s):= a_i(s) \mathbf{1}_{|a_i(s)|\leq R} + R \mathbf{1}_{\{a_i(s) \geq R\}} - R \mathbf{1}_{\{a_i(s) \leq - R\}}
\]
and let $a^R(s):=(a^R_{1}(s),...,a^R_{m}(s))$.

Since $Q(x) \geq Q^L(x,a)$, we only need to estimate $Q(x)-Q^L(x,a)$. Fixing $T>0$, we have that
\begin{equation}\label{eq:roc}
\begin{split}
Q(x)-Q^L(x,a) & \leq \int_0^\infty e^{-\gamma s }\bigg(r(x(s),a(s))- r(x_L(s),a_L(s))\bigg)\mathrm{d}s\\
&\leq \int_0^T e^{-\gamma s} \bigg(r(x(s),a(s))-r(x(s),a^R(s))\bigg) \mathrm{d}s \\&\quad\quad+ \int_{0}^T e^{-\gamma s} \bigg(r(x(s),a^R(s)-r(x_L(s),a_L(s))\bigg) \mathrm{d}s + C e^{-T}
\end{split}
\end{equation}
for some $C$ depending only on $\gamma$ and $C_1$ from Assumption~\ref{ass:naive}.
We notice that the first term of~\eqref{eq:roc} can be bounded as
\begin{equation*}
\begin{split}
\int_0^T e^{-\gamma s} \bigg(r(x(s),a(s))-r(x(s),a^R(s))\bigg) \mathrm{d}s &\leq C T \min\{{\frac{1}{R^{\sigma}},\|a(s)-a^R(s)\|_2}\}\\
& \leq \frac{CT}{R^\sigma}.
\end{split}
\end{equation*}

For the second part of~\eqref{eq:roc}, we assume for the moment that $R$ and $\varepsilon$ satisfy 
\[ 
|\frac{d a^{R,\varepsilon}(s)}{ds}| \leq \frac{e R}{\varepsilon} = L,\]
where $a_i^{R,\varepsilon}(s) := a_i^R(s) * \eta_\varepsilon$, and hence, $a^{R,\varepsilon}(s) \in \mathcal{A}^{L}$. 
Therefore, the second term in~\eqref{eq:roc} is estimated as
\begin{equation*}
\begin{split}
\int_{0}^T e^{-\gamma s} \bigg(r(x(s),a^R(s))-r(x_L(s),a_L(s))\bigg) \mathrm{d}s &= \int_0^T e^{-\gamma s} \bigg(r(x(s),a^R(s))-r(x_L(s),a_L(s))\bigg) \mathrm{d} s \\
&\leq \underbrace{\int_0^T e^{-\gamma s}\bigg(r(x(s),a^R(s))-r(x^{R,\varepsilon}(s),a^{R,\varepsilon}(s)\bigg)\mathrm{d}s}_{(a)},
\end{split}
\end{equation*}
where $x^{R,\varepsilon}(s)$ satisfies
\[
\begin{cases}
 (x^{R,\varepsilon})' (s) =f(x^{R,\varepsilon}(s),a^{R,\varepsilon}(s))\quad\text{for}s>0,\\
x^{R,\varepsilon}(0)=x,\\
a^{R,\varepsilon}(0)=a.
\end{cases}
\]
Now observing $(a)$,
\begin{equation}\label{eq:a}
\begin{split}
(a) &\leq \int_0^T e^{-\gamma s} |r(x(s),a^R(s))-r(x(s),a^{R,\varepsilon}(s)) | \mathrm{d}s+\int_0^T e^{-\gamma s} |r(x(s),a^{R,\varepsilon}(s))-r(x^{R,\varepsilon}(s),a^{R,\varepsilon}(s))| \mathrm{d}s\\
&\leq  C\|a^R(s) - a^{R,\varepsilon}(s)\|^2_{L^2([0,\infty))} +\int_0^T e^{-\gamma s} |r(x(s),a^{R,\varepsilon}(s))-r(x^{R,\varepsilon}(s),a^{R,\varepsilon}(s))| \mathrm{d}s\\
&\leq C\|a(s) - a^{\varepsilon}(s)\|^2_{L^2([0,\infty))} +\int_0^T e^{-\gamma s} |r(x(s),a^{R,\varepsilon}(s))-r(x^{R,\varepsilon}(s),a^{R,\varepsilon}(s))| \mathrm{d}s,
\end{split}
\end{equation}
where $a^\varepsilon(s):=(a_1(s)*\eta_\varepsilon,...,a_m(s)*\eta_\varepsilon)$.

Let $w(s):=\frac{1}{2}\|x(s)-x^{R,\varepsilon}(s)\|_2^2$ and $\tilde C = 2 \|f\|_{\text{Lip}{(\mathbb{R}^n\times \mathbb{R}^m)}}$.
\begin{equation*}
\begin{split}
w'(s) &= (x(s)-x^{R,\varepsilon}(s)) \cdot \bigg(f(x(s),a(s)-f(x^{R,\varepsilon}(s),a^{R,\varepsilon(s)} \bigg)\\
&= (x(s)-x^{R,\varepsilon}(s)) \cdot \bigg(f(x(s),a(s))-f(x(s),a^{R,\varepsilon}(s))+f(x(s),a^{R,\varepsilon}(s))-f(x^{R,\varepsilon}(s),a^{R,\varepsilon}(s))   \bigg)\\
&\leq \|x(s)-x^{R,\varepsilon}(s)\|_2 \times \\
&\quad\quad\quad\bigg( \|f(x(s),a(s))-f(x(s),a^R(s))\|_2 +\|f(x(s),a^R(s))-f(x(s),a^{R,\varepsilon}(s))\|_2+\frac{\tilde C}{2}\|x(s)-x^{R,\varepsilon}(s)\|_2\bigg)\\
&\leq \tilde C w +  \|x(s)-x^{R,\varepsilon}(s)\|_2\bigg(\|f(x(s),a(s))-f(x(s),a^R(s))\|_2 +\|f(x(s),a^R(s))-f(x(s),a^{R,\varepsilon}(s))\|_2\bigg)\\
& \leq \tilde C w + 2w+ \frac{1}{2}\bigg( \|f(x(s),a(s))-f(x(s),a^R(s))\|^2_2 +\|f(x(s),a^R(s))-f(x(s),a^{R,\varepsilon}(s))\|^2_2 \bigg)\\
&\leq (\tilde C+2) w + \frac{C}{R^{2\sigma}}+\frac{\tilde C}{4}\|a^R(s) - a^{R,\varepsilon}(s)\|_2^2.
\end{split}
\end{equation*}
By the Gronwall's inequality, for any $t\in[0,T]$, we have that
\begin{equation*}
\begin{split}
e^{-(\tilde C+2) t} \|x(t)-x^{R,\varepsilon}(t)\|_2^2 &\leq C\bigg(\frac{1}{R^{2\sigma}}+\int_0^t \|a^R(s)-a^{R,\varepsilon}(s)\|_2^2 \mathrm{d}s \bigg)\\
&\leq C\bigg(\frac{1}{R^{2\sigma}}+ \|a(s)-a^\varepsilon(s)\|^2_{L^2([0,\infty))}\bigg).
\end{split}
\end{equation*}
Noticing that $2\gamma \geq \tilde C + 2$, we have
\begin{equation*}
\begin{split}
\int_0^T e^{-\gamma s} |r(x(s),a^{R,\varepsilon}(s))-r(x^{R,\varepsilon}(s),a^{R,\varepsilon}(s))| \mathrm{d}s & \leq C\int_0^T e^{-\gamma s}\|x(s)-x^{R,\varepsilon}(s)\|_2\mathrm{d}s\\
&\leq C \bigg(\int_0^T 1\mathrm{d}s\bigg)^{1/2} \bigg(\int_0^T e^{-2\gamma s}\|x(s)-x^{R,\varepsilon}(s)\|_2^2 \mathrm{d}s \bigg)^{1/2} \\
& \leq C T \bigg( \frac{1}{R^{2\sigma}}+\|a(s)-a^\varepsilon(s)\|^2_{L^2([0,\infty))}\bigg).
\end{split}
\end{equation*}

Finally, we set $R=\sqrt{L}$, $T=\log R$, and $\varepsilon=\frac{eR}{L}$,
\begin{equation*}
\begin{split}
Q(x)-Q^L(x,a) &\leq C(\frac{T}{R^\sigma}+T\|a(s)-a^\varepsilon(s)\|^2_{L^2([0,\infty))}+e^{-T})\\
&\leq O( \frac{T}{R^\sigma} + T\varepsilon^{2\beta} + \frac{1}{\sqrt{L}})\\
&\leq O\bigg(\frac{\log L}{L^{\sigma/2}}+(\frac{\log L}{\sqrt{L}})^{\beta}+\frac{1}{\sqrt{L}}\bigg).
\end{split}
\end{equation*}
\end{proof}


\section{Generalized Hamilton--Jacobi based Q-learning}\label{sec:gen}
The core idea of Hamilton--Jacobi based Q-learning for the Lipschitz continuous control problem proposed by~\cite{kim2021hamilton} is to restrict the derivative of the action $a(\cdot)$ to be bounded, that is, $|\dot a (\cdot)| \leq L$ for some $L>0$, which results in 
\[
\gamma Q^L - D_x Q^L \cdot f(x,a) - L\|D_a Q^L\|_2 -r(x,a)=0
\]
for $Q^L$ defined as~\eqref{eq:Q_L_dynamic}. We now generalize the constraint by introducing
\[
\mathcal{A}_p^{L}:=\{ a(\cdot) \in \mathcal{A}^L: \|\dot a(\cdot) \|_p \leq L \quad \text{for all}\quad s\in [0, \infty) \}
\]
for $p \in [1,\infty]$. Here, we note that $\mathcal{A}_2^L = \mathcal{A}^L$ when $p=2$.

The value function is defined as
\begin{equation}\label{eq:Q_L_dynamic_p}
Q_p^L(x,a) = \sup_{a\in \mathcal{A}_p^{L}}\left\{ \int_0^\infty e^{-\gamma s } r(x(s),a(s)) \mathrm{d}t : x(0)=x , a(0) =a\right\},
\end{equation}
and we obtain a slightly different Hamilton--Jacobi equation
\begin{equation*}
\begin{split}
\gamma Q^L - D_x Q^L \cdot f(x,a) -r(x,a) -\sup_{\|b\|_p \leq L} b \cdot D_a Q &=\gamma Q^L - D_x Q^L \cdot f(x,a) -r(x,a) -L \|D_a Q\|_q=0,
\end{split}
\end{equation*}
where $\frac{1}{p}+\frac{1}{q}=1$.

\subsection{Demonstration of optimal action}
Given $L$ and $p$, let us examine the structural property of optimal action when the value function $Q_p^L(x,a)$ is differentiable everywhere. 

Let us illustrate some special cases $p=1$ or $p=\infty$ first and consider general $p\in(1,\infty)$. When $p=1$, $Q^L:=Q^L_1$ is a viscosity solution to
\[
\gamma Q^L - D_x Q^L \cdot f(x,a) -r(x,a) - L \max_{i \in [1,m]} \{|D_{a_i} Q^L |\}=0.
\]
The optimal action $a(\cdot)=(a_1(\cdot),...,a_m(\cdot))$ satisfies that
\begin{equation*}
\dot a_j(\cdot)=\begin{cases} \pm L \quad&\text{if}\quad j=\max_{i\in[1,m]}|D_{a_i}Q^L(x(\cdot),a(\cdot))|,\\
 0 \quad&\text{if}\quad D_{a_i}Q^L(x(\cdot),a(\cdot))\neq 0 \quad\text{and}\quad i\neq j.
\end{cases} 
\end{equation*}

On the other hand, if $p=\infty$, we also derive that $Q^L := Q^L_\infty(x,a)$ solves
\[
\gamma Q^L - D_x Q^L \cdot f(x,a) -r(x,a) - L\sum_{i=1}^m |D_{a_i} Q^L |=0
\]
in the viscosity sense. Similarly, the optimal action satisfies that
\begin{equation*}
\dot a_i(\cdot)=\begin{cases} L \quad&\text{if}\quad D_{a_i}Q^L(x(\cdot),a(\cdot))>0,\\
 -L \quad&\text{if}\quad D_{a_i}Q^L(x(\cdot),a(\cdot))<0.
\end{cases} 
\end{equation*}
One interesting feature of implementing $\ell_\infty$ constraint is that the rate of change of control $\dot a_i (\cdot)$ is independent of the coordinate while that under $\ell_2$ constraint is not. Specifically, $\dot a_1(t)=L$ for some $t$ fixed does not affect the choice of $\dot a_2(t)$ under $\ell_\infty$ constraint but the condition forces $\dot a_i(t)=0$ for $i \neq 0$. 

We now characterize the optimal control associated with $p\in (1,\infty)$.
\begin{proposition}\label{prop:gen}
Let $p\in (1,\infty)$ and $Q^L_p(x,a)$ from~\eqref{eq:Q_L_dynamic_p} be differentiable. If $D_{a_i} Q^L_p(x,a) \neq 0$ for all $i$, then optimal action $a(\cdot)$ satisfies 
\[
\dot a_i(\cdot) = \pm L \frac{(D_{a_i} Q^L)^{1/(p-1)}}{\|D_a Q^L\|_q^{q/p}},
\]
where $\frac{1}{p}+\frac{1}{q}=1$.
\end{proposition}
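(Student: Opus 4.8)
The plan is to combine the dynamic programming characterization of the optimal feedback control with an explicit Lagrange-multiplier computation of the Hamiltonian maximizer. Since $Q^L_p$ is assumed differentiable and solves
\[
\gamma Q^L_p - D_x Q^L_p \cdot f(x,a) - r(x,a) - \sup_{\|b\|_p \leq L} b\cdot D_a Q^L_p = 0
\]
in the classical sense, the standard verification argument (as in~\cite{bardi1997optimal,kim2021hamilton}) shows that along an optimal trajectory $(x(\cdot),a(\cdot))$ the velocity $\dot a(s)=b(s)$ must pointwise attain the supremum defining the Hamiltonian, i.e. $\dot a(s) \in \argmax_{\|b\|_p \leq L} b \cdot D_a Q^L_p(x(s),a(s))$. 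Thus the whole problem reduces to identifying the maximizer of the linear functional $b \mapsto b\cdot v$ over the ball $\{\|b\|_p \leq L\}$, where I abbreviate $v := D_a Q^L_p(x(s),a(s))$ and recall the hypothesis $v_i \neq 0$ for every $i$.

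First I would note that, because $v \neq 0$, the maximum is attained on the boundary sphere $\|b\|_p = L$, so I may replace the inequality constraint by the equality constraint $\sum_i |b_i|^p = L^p$ and apply Lagrange multipliers. Stationarity of $b \cdot v - \lambda\big(\sum_i |b_i|^p - L^p\big)$ yields $v_i = \lambda p\, |b_i|^{p-1}\operatorname{sgn}(b_i)$ for each $i$; since we seek a maximizer and $v_i \neq 0$, the sign of $b_i$ must agree with that of $v_i$ and $|b_i| = (|v_i|/(\lambda p))^{1/(p-1)}$.

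Next I would eliminate the multiplier using the constraint. Raising to the power $p$ gives $|b_i|^p = |v_i|^q/(\lambda p)^q$ via $p/(p-1)=q$, so summing and imposing $\sum_i |b_i|^p = L^p$ determines $\lambda p = \|v\|_q / L^{p-1}$. Substituting back and using the identity $q/p = 1/(p-1)$ produces
\[
|b_i| = \frac{L\, |v_i|^{1/(p-1)}}{\|v\|_q^{q/p}},
\]
and reinstating $\operatorname{sgn}(b_i) = \operatorname{sgn}(v_i)$ gives exactly the claimed formula for $\dot a_i$. Equivalently, this is just the equality case of H\"older's inequality, which both confirms $\sup_{\|b\|_p\le L} b\cdot v = L\|v\|_q$ and pins down the optimizer as the one with $|b_i|^p \propto |v_i|^q$ and matching signs.

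The routine part is the Lagrange computation together with the bookkeeping of the conjugate exponents. The genuine subtlety, and the step I expect to require the most care, is the verification direction: justifying rigorously that the optimal control's derivative realizes the pointwise Hamiltonian maximizer. This relies on the differentiability hypothesis on $Q^L_p$ and the dynamic programming principle, and in principle needs a measurable-selection argument to ensure $s \mapsto \dot a(s)$ is an admissible element of $\mathcal{A}_p^{L}$; here the nondegeneracy assumption $D_{a_i}Q^L_p \neq 0$ is precisely what makes the maximizer unique and the feedback map single-valued, so that the formula above is well defined.
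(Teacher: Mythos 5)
Your proposal is correct, but there is nothing in the paper to compare it against: Proposition~\ref{prop:gen} is stated without proof (the paper treats the $p=1$ and $p=\infty$ characterizations in the same informal way, simply asserting the form of $\dot a$). Your argument is the natural one and is plainly what the authors intend: reduce to the pointwise Hamiltonian maximization $\sup_{\|b\|_p\le L} b\cdot v$ with $v=D_aQ^L_p(x(s),a(s))$, then identify the maximizer via the equality case of H\"older's inequality. Your exponent bookkeeping checks out: from $v_i=\lambda p\,|b_i|^{p-1}\operatorname{sgn}(b_i)$ and $\sum_i|b_i|^p=L^p$ one gets $\lambda p=\|v\|_q/L^{p-1}$ (since $p/q=p-1$) and hence $|b_i|=L\,|v_i|^{1/(p-1)}/\|v\|_q^{q/p}$ with $\operatorname{sgn}(b_i)=\operatorname{sgn}(v_i)$, which is exactly the stated formula once the paper's $\pm$ and $(D_{a_i}Q^L)^{1/(p-1)}$ are read as $\operatorname{sgn}(D_{a_i}Q^L)\,|D_{a_i}Q^L|^{1/(p-1)}$. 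Two small points. First, your remark that the hypothesis $D_{a_i}Q^L_p\neq 0$ for all $i$ "makes the maximizer unique" is slightly off: for $1<p<\infty$ the $\ell_p$ ball is strictly convex, so the maximizer is unique whenever $v\neq 0$; the componentwise nonvanishing only guarantees every coordinate formula is nondegenerate and the sign in each coordinate is determined. Second, you are right that the verification direction is the only step requiring real work, and you correctly flag it rather than hand-wave it; within the paper's framework one would run the standard argument that along an optimal pair the map $s\mapsto e^{-\gamma s}Q^L_p(x(s),a(s))+\int_0^s e^{-\gamma t}r\,\mathrm{d}t$ is constant, differentiate a.e.\ using the assumed differentiability of $Q^L_p$, and compare with the HJB equation to conclude $D_aQ^L_p\cdot\dot a = L\|D_aQ^L_p\|_q$ a.e., which forces $\dot a$ to be the H\"older-equality point; existence of the optimal pair is supplied by the analogue of Proposition~\ref{prop:existence} (cf.~\cite{bardi1997optimal,kim2021hamilton}). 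Since the feedback map is then single-valued and explicit, no separate measurable-selection step is actually needed here, contrary to your caution. In short: a complete and correct argument filling a gap the paper leaves open.
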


\subsection{Convergence result}
Regardless of the choice of $p$, invoking the same argument presented in Theorem~\ref{thm:conv}, we still have the general convergence result. 
\begin{cor}
Let $p \in [1,\infty]$ and  $Q^L:=Q_p^L(x,a):\mathbb{R}^n \times \mathbb{R}^m \rightarrow \mathbb{R}$ be a viscosity solution to
\begin{equation}\label{eq:ql_q_norm}
\gamma Q^L - D_x Q^L \cdot f(x,a) - L\|D_a Q^L\|_q -r(x,a)=0
\end{equation}
for $q$ satisfying $\frac{1}{p}+\frac{1}{q}=1$.
Then we have that
\[
Q^L(x,a) \rightarrow Q(x)\quad\text{locally uniformly}\quad\text{as}\quad L\rightarrow \infty,
\]
where $Q:=Q(x)$ is a unique viscosity solution to
\[
\gamma Q - \sup_{a\in\mathbb{R}^m} (D_x Q \cdot f(x,a)+r(x,a))=0.
\]
\end{cor}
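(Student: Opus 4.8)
The plan is to repeat the half-relaxed limit argument of Theorem~\ref{thm:conv} essentially verbatim, observing that the Euclidean norm entered that proof only through two elementary facts that remain valid for every conjugate exponent $q\in[1,\infty]$: that $\|v\|_q=0$ forces $v=0$, and that $\|D_aQ\|_q=0$ whenever $Q$ is independent of $a$. Thus no part of the structure is lost when $\|\cdot\|_2$ is replaced by $\|\cdot\|_q$, and the entire content of the corollary is this observation.

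First I would establish the uniform bound
\[
-\frac{\|r\|_{L^\infty(\mathbb{R}^n\times\mathbb{R}^m)}}{\gamma}\le Q_p^L(x,a)\le \frac{\|r\|_{L^\infty(\mathbb{R}^n\times\mathbb{R}^m)}}{\gamma}
\]
by noting that the constants $\pm\|r\|_{L^\infty}/\gamma$ are respectively a supersolution and a subsolution of~\eqref{eq:ql_q_norm}: at a constant one has $D_xQ^L=0$ and $D_aQ^L=0$, so the $\|\cdot\|_q$ term drops out exactly as the $\|\cdot\|_2$ term did, and the comparison principle applies. Consequently the half-relaxed limits
\[
Q^*:={\limsup}^*_{L\to\infty}Q_p^L,\qquad Q_*:={{\liminf}_*}_{L\to\infty}Q_p^L
\]
are well defined and finite.

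Next I would obtain $Q_p^L\le Q$. Since $Q=Q(x)$ does not depend on $a$, we have $\|D_aQ\|_q=0$, so $Q$ is a supersolution of~\eqref{eq:ql_q_norm}; the comparison principle then gives $Q_p^L\le Q$ and hence $Q^*\le Q$. For the reverse inequality I would divide~\eqref{eq:ql_q_norm} by $L$ to view $Q_p^L$ as a viscosity solution of
\[
-\|D_aQ^L\|_q+\tfrac{1}{L}\bigl(\gamma Q^L-D_xQ^L\cdot f(x,a)-r(x,a)\bigr)=0,
\]
and pass to the limit $L\to\infty$ using stability of viscosity solutions under half-relaxed limits: since $Q^L$, $f$ and $r$ are bounded, the Hamiltonians converge locally uniformly to $(x,a,\xi)\mapsto-\|\xi_a\|_q$, so $Q_*$ is a supersolution of $-\|D_aQ_*\|_q\ge0$. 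Because $\|\cdot\|_q$ is a genuine norm for every $q\in[1,\infty]$, this forces $D_aQ_*=0$, i.e.\ $Q_*=Q_*(x)$ is independent of $a$. This is the single step where the norm structure is used, and it is where I would be careful to invoke only that $\|\cdot\|_q$ vanishes solely at the origin.

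Finally, rewriting~\eqref{eq:ql_q_norm} as $\gamma Q^L-D_xQ^L\cdot f-r=L\|D_aQ^L\|_q\ge0$ and again taking the half-relaxed limit shows $Q_*$ is a supersolution of $\gamma Q_*-D_xQ_*\cdot f-r\ge0$; since $Q_*$ is $a$-independent this upgrades to $\gamma Q_*-\sup_{a\in\mathbb{R}^m}(D_xQ_*\cdot f+r)\ge0$, whence the comparison principle yields $Q_*\ge Q$. Combining with $Q^*\le Q$ and the trivial $Q_*\le Q^*$ gives $Q\le Q_*\le Q^*\le Q$, so $Q_*=Q^*=Q$, which is exactly local uniform convergence $Q_p^L\to Q$. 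I do not anticipate any genuine obstacle beyond Theorem~\ref{thm:conv}: the generalization succeeds precisely because replacing $\|\cdot\|_2$ by $\|\cdot\|_q$ leaves untouched every structural fact the half-relaxed limit argument relies on, uniformly in $q\in[1,\infty]$.
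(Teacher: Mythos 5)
Your proposal is correct and follows exactly the route the paper intends: the paper gives no separate proof for this corollary, stating only that the argument of Theorem~3.1 applies verbatim, and your write-up is precisely that argument with $\|\cdot\|_2$ replaced by $\|\cdot\|_q$. Your explicit identification of the two places where the norm structure matters (constants kill the gradient term for the comparison bounds, and $\|D_aQ_*\|_q=0$ forces $a$-independence because $\|\cdot\|_q$ vanishes only at the origin, uniformly over $q\in[1,\infty]$) is a correct and slightly more careful account than the paper's one-line remark.
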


\section{Numerical experiment}\label{sec:num}
This section presents numerical experiments conducted using the HJDQN learning framework, with different values of $L$ and $p$.  We assess performance in the Hopper-v2, HalfCheetah-v2, and Walker2d-v2 environments from the OpenAI Gym suite \cite{brockman2016openai}, simulated using the MuJoCo engine \cite{todorov2012mujoco}. Additionally, we include a 20D linear quadratic regulator (LQR) control problem as a benchmark task. Our experiments are based on the code provided by \cite{kim2021hamilton}\footnote{\url{https://github.com/HJDQN/HJQ}}.
Unless otherwise stated, the hyperparameters remain consistent with those reported in \cite{kim2021hamilton}.

\subsection{Proposed algorithm}
Yet almost identical to the procedure presented in~\cite{kim2021hamilton}, we include the modified algorithm for completeness.

\begin{algorithm}
\caption{Modified Hamilton--Jacobi DQN ($p$-HJDQN)}
\label{alg:ula}
\begin{algorithmic}[1]
\State $L$, $p$ and $h$ are given;
\State {\bf Initialization:} Initialize $Q$-function by a neural network with random weight $\theta$ as $Q_\theta$, and the target $Q$-function as $Q_{\theta^-}$ with weights $\theta^{-}=\theta$;
\State Initialize replay buffer with fixed capacity;
\For{episode=1 to $M$}
\State Sample initial state-action pair $(x_0,a_0)$;
\For{k=0 to $K$}
    \State Execute action $a_k$ and observe reward $r_k$ and the next state $x_{k+1}$;
    \State Store $(x_k,a_k,r_k,x_{k+1})$ in buffer;
    \State Sample the random mini-batch \{($x_j,a_j,r_j,x_{j+1}$)\} from buffer;
    \State Compute $\eta$ via Proposition~\ref{prop:gen};
    \State Set $y_{j}^- := hr_j+(1-\gamma h )Q_{\theta^-}(x_{j+1},a_j')$ for all $j$ where $a_j':= a_j  \pm hL \frac{(D_{a_i} Q^L)^{1/(p-1)}}{\|D_a Q^L\|_q^{q/p}}$. 
    \State Update $\theta$ by minimizing $\sum_{j}(y_j^- - Q_\theta(x_j,a_j))^2$;
    \State $\theta^- \leftarrow (1-\alpha)\theta^- +\alpha \theta$ for $\alpha$ small positive;
    \State Set the next action as $a_{k+1}:=a_k+h \eta +\varepsilon$, where $\varepsilon\sim N(0,\sigma^2 I_m)$;
    \EndFor
\EndFor
\end{algorithmic}
\end{algorithm}

\subsection{Numerical analysis on convergence result}
This subsection provides empirical evidence on the convergence result in Lemma \ref{lem:stepwise_converge}. To this end, we respectively sample $500$ states and actions and  then compute $\max_{1\leq i,j\leq 500} |Q_{L}(x_i,a_j) -Q_{L+10}(x_i,a_j)|$ for different values of $L$ from $L=10,...,150$.

The numerical values of $\max_{1\leq i,j\leq 500} |Q_{L}(x_i,a_j) - Q_{L+10}(x_i,a_j)|$ for four different tasks are presented in Table \ref{table:Q_diff}. Figure \ref{fig:Q_dif_plot} illustrates the corresponding line graphs derived from the data in Table \ref{table:Q_diff}. A decreasing trend of $\|Q_{L+10}-Q_{L}\|_{\text{Lip}(\mathbb{R}^{n}\times \mathbb{R}^{m})}$ is observed as $L$ increases, providing empirical support for Lemma \ref{lem:stepwise_converge}. However, this convergence trend exhibits fluctuations and instability, particularly in the HalfCheetah-v2 and 20-dimensional LQ problem environments. Further investigation is warranted to understand the causes of instability and to refine the HJDQN learning framework.

\begin{figure}[h]
  \centering
  \includegraphics[width=\textwidth]{./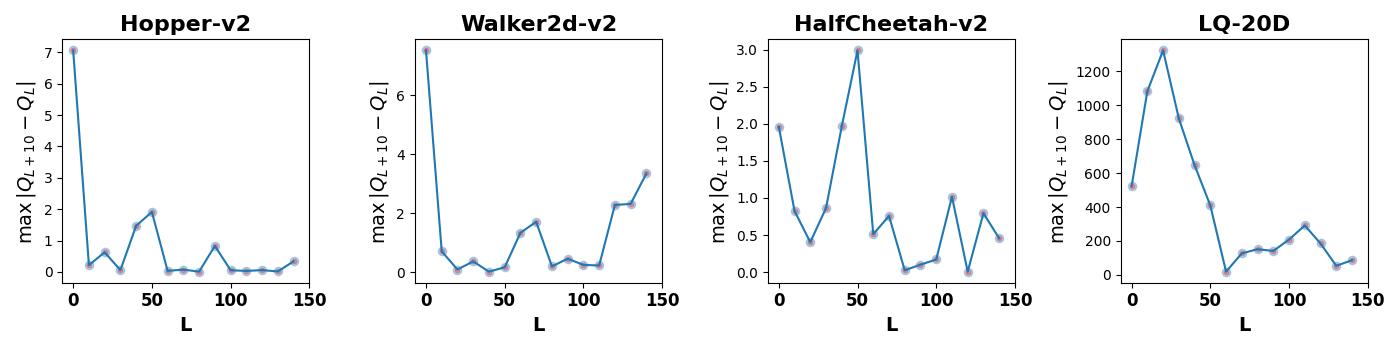}
  \caption{Plot for the Table \ref{table:Q_diff}}
  \label{fig:Q_dif_plot}
\end{figure}

\begin{table}[h]
\centering
\begin{tabular}{ccccc}
\hline
                    & Hopper-v2          & Walker2d-v2                 & HalfCheetah-v2  &  LQR-20D \\ \hline
$|Q_{20}-Q_{10}|$    & $7.0607$         & $7.5138$                 & $1.9608$  & $522.4116$   \\ 
$|Q_{30}-Q_{20}|$    & $0.2177$         & $0.7117$    & $0.8246$  & $1082.9729$     \\ 
$|Q_{40}-Q_{30}|$    & $0.6256$         & $0.0996$    & $0.4034$  & $1323.8116$    \\ 
$|Q_{50}-Q_{40}|$    & $0.0577$         & $0.3741$    & $0.8640$  & $922.2965$     \\ 
$|Q_{60}-Q_{50}|$    & $1.4704$         & $0.0268$    & $1.9657$   & $646.7611$    \\ 
$|Q_{70}-Q_{60}|$    & $1.9120$         & $0.1756$    & $2.9905$  & $408.9908$  \\ 
$|Q_{80}-Q_{70}|$    & $0.0351$         & $1.3410$    & $0.5117$   & $18.6145$   \\ 
$|Q_{90}-Q_{80}|$    & $0.0746$         & $1.7137$    & $0.7575$ & $126.5660$   \\ 
$|Q_{100}-Q_{90}|$   & $0.0093$         & $0.2049$    & $0.0248$  & $151.0665$   \\ 
$|Q_{110}-Q_{100}|$  & $0.8314$         & $0.4649$    & $0.1005$  & $140.7522$  \\ 
$|Q_{120}-Q_{110}|$  & $0.0557$         & $0.2555$    & $0.1706$  & $208.1184$       \\ 
$|Q_{130}-Q_{120}|$  & $0.0317$         & $0.2358$    & $1.0160$  & $291.1623$       \\ 
$|Q_{140}-Q_{130}|$  & $0.0586$         & $2.2847$    & $0.0063$  & $184.8106$       \\ 
$|Q_{150}-Q_{140}|$  & $0.0155$        & $2.3188$     & $0.7942$  & $53.1037$       \\ 
$|Q_{160}-Q_{150}|$  & $0.3373$        & $3.3487$     & $0.4554$  & $86.3219$. 
\end{tabular}
\caption{
Numerical calculation of $\|Q^{L+10} -Q^{L}\|_{L^\infty(\mathbb{R}^{n}\times \mathbb{R}^{m})}$ for different tasks.
}
\label{table:Q_diff}
\end{table}

\subsection{Numerical analysis on different values of $1\leq p\leq \infty$}

In this section, we explore the impact of varying $p$ values on the results. We examine $p$ values of $1, 2, 10, 100$ and $\infty$. As a benchmark, we utilize the DDPG algorithm, as introduced in \cite{lillicrap2015continuous}. We present the learning curves for different tasks and $p$ values below.

It's noteworthy that in the HalfCheetah-v2 environment, DDPG consistently outperforms all HJDQN variants for $p = 1, 2, 10, 100$ and $\infty$. Also, the standard HJDQN setting ($p=2$) demonstrates superior performance over other $p$ values.

Conversely, the HJDQN framework exhibits enhanced performance in the Walker2d-v2 and 20-dimensional LQ problems. Specifically, in Walker2d-v2, we observe an upward trend in the average return as $p$ increases, highlighting the influence of $p$ can differ across various tasks.

Subsequently, we compare action trajectories between DDPG and HJDQN across different $p$ values, as illustrated in Figure \ref{fig:cheetah_action}, \ref{fig:walker_action} and \ref{fig:lqr-1}, \ref{fig:lqr-2} and \ref{fig:lqr-3}. A notable observation is the increased oscillation frequency in the actions with rising $p$ values, indicating a potential impact on system stability or efficiency.


\begin{figure}[h!]
  \centering  \includegraphics[width=\textwidth]{./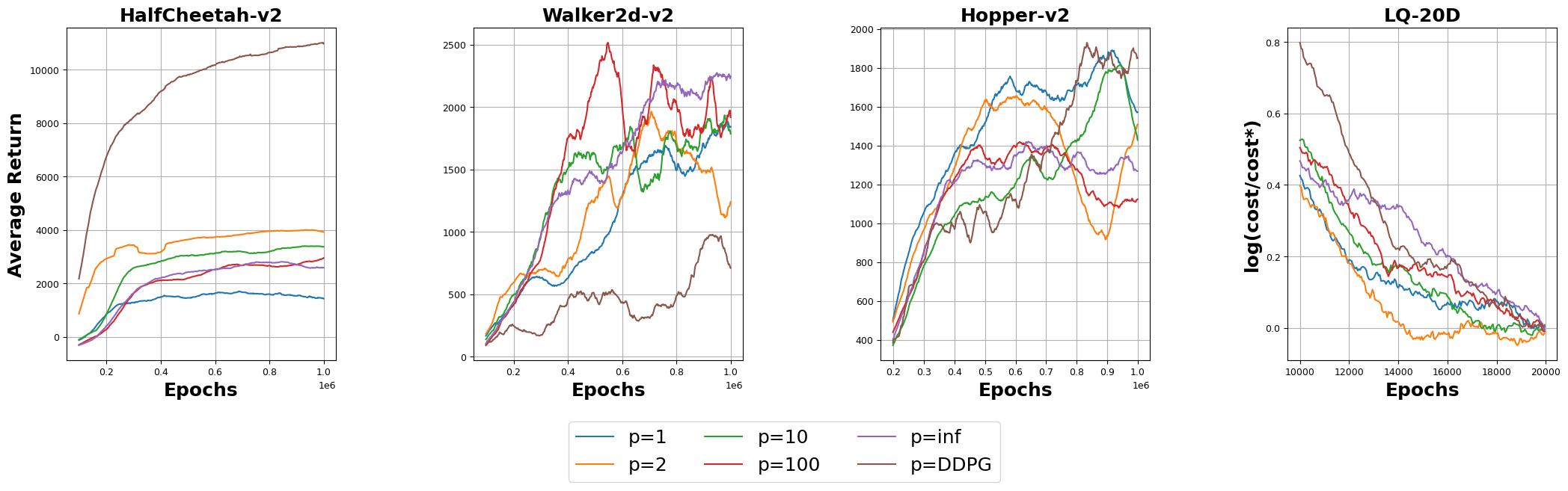}
  \caption{Learning curves for different tasks.}
  \label{fig:Q_dif_plot}
\end{figure}

\begin{figure}[h!]
  \centering  \includegraphics[width=\textwidth]{./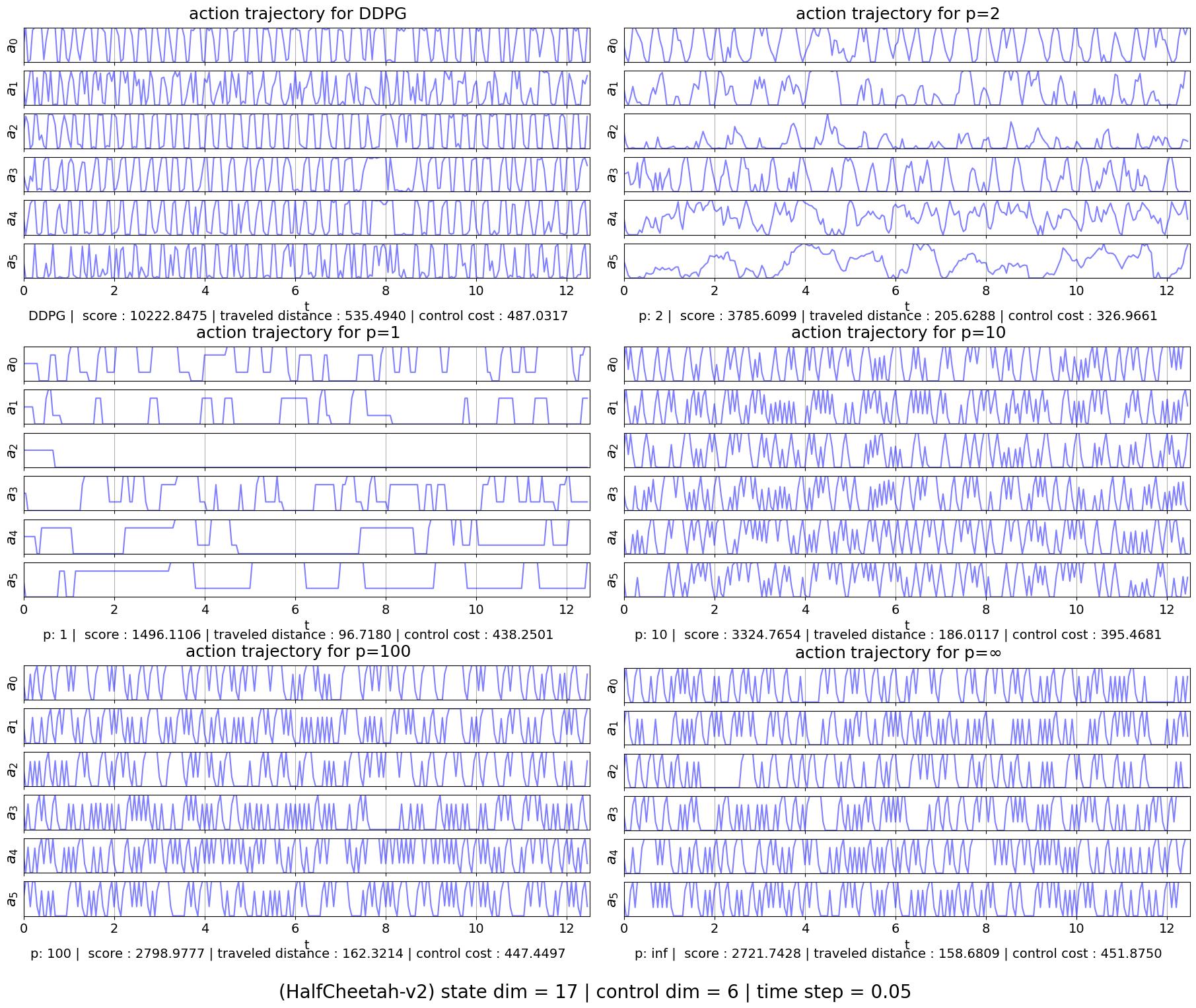}
  \caption{Action trajectories obtained by different values of $p$ in  HJDQN and DDPG for HalfCheetah-v2.}
    \label{fig:cheetah_action}
\end{figure}


\section{Discussions}\label{sec:con}
We study the behavior of the value function for optimal control problems where the control is restricted to be $L$-Lipschitz continuous, which arises in a novel HJB-based $Q$-learning. By augmenting the control variable, we formulate the problem in the standard optimal control problem where the control takes value in a compact set and investigates the effect of $L$, particularly, stability properties and the rate of convergence. We extend the theory by introducing $p$-norm and discover that different choice of $p$ indeed leads to different features of optimal control. We observe that the larger $p$ induces a more frequent change of control. 

The choice of $p$ is subtle. HJDQN with $p=2$ and DDPG are not always leading to.

{\bf Conflict of interest}
The authors declare that they have no conflict of interest.

{\bf Data availability}
 Data sharing does not apply to this article as no datasets were
generated or analyzed during the current study.


\bibliographystyle{amsplain}
\bibliography{references.bib}

\section{Appendix}\label{sec:appendix}
\subsection{Existence of $\psi$ in \eqref{eq:psi}}\label{subsec:borel_psi}
In this subsection, we prove the existence a Borel function satisfying \eqref{eq:psi} using  Borel selection theorem for set-valued functions. 
Since  $\tilde{Q}$  is a semiconcave, there exists a constant $C>0$ and a convex function $u:\mathbb{R}^{n} \to \mathbb{R}$ such that 
$$
\tilde{Q}(x) = C|x|^2 -u(x).
$$
It is enough to show that there exists a Borel function satisfying $g(x) \in D_x^* u(x)$.
We denote by $2^{\mathbb{R}^{n}}$ the power set of $\mathbb{R}^{n}$.
Let us define  a set-valued function $T: \mathbb{R}^{n} \to 2^{\mathbb{R}^{n}}$ by $T(x) =D^-_xu(x)$ and $T_{D^*_xu}(x) = D^*_xu(x)$.

By \cite{borwein2010convex}[Proposition 6.1.1],
for all closed set $F\subset \mathbb{R}^n$, we have  
$$
T^{-1}(F):= \{x\in \mathbb{R}^{n} : T(x)\cap F\not =\emptyset 
\}\quad \text{is closed.}
$$
From the definition of $D^*_xu$, it is clear that $T_{D_x^*}(x)$ is a closed multi-valued function, see \cite{border2013introduction}[Definition 16], for the details.
Since $\tilde{Q}$ is Lipschitz for each $x\in \mathbb{R}^{n}$, so is $u$ which implies that  $T(x)$ is compact-valued for each $x\in \mathbb{R}^{n}$.
By the definition, for $\tilde T(x):=T(x)\cap T_{D_x^*}(x)$, $\tilde T(x)=T_{D_x^*}(x)$ and it also satisfies the condition
$$
\tilde T^{-1}(F)= \{x\in \mathbb{R}^{n} : \tilde{T}(x)\cap F\not =\emptyset 
\}\quad \text{is closed for any closed set $F$},
$$
by \cite{guide2006infinite}[Theorem 17.25].
Next, according to the Kuratowski--Ryll-Nardzewski Selection Theorem, as referenced in \cite{guide2006infinite}[18.13], there exists a Borel function such that \( g(x) \in D_x^{*} u \), which completes the proof. For more details, we refer to \cite{guide2006infinite}[Chapters 17 and 18],  \cite{borwein2010convex}[Chapter 6] and \cite{border2013introduction}, which discuss selection theory related to set-valued functions and convex functions.

\subsection{Action plots on Walker2d-v2 and 20-dimensional LQ problems}
We provide action trajectories plots for Walker-2D and 20-dimensional LQ problems in this subsection. 

\begin{figure}[h!]
  \centering  \includegraphics[width=\textwidth]{./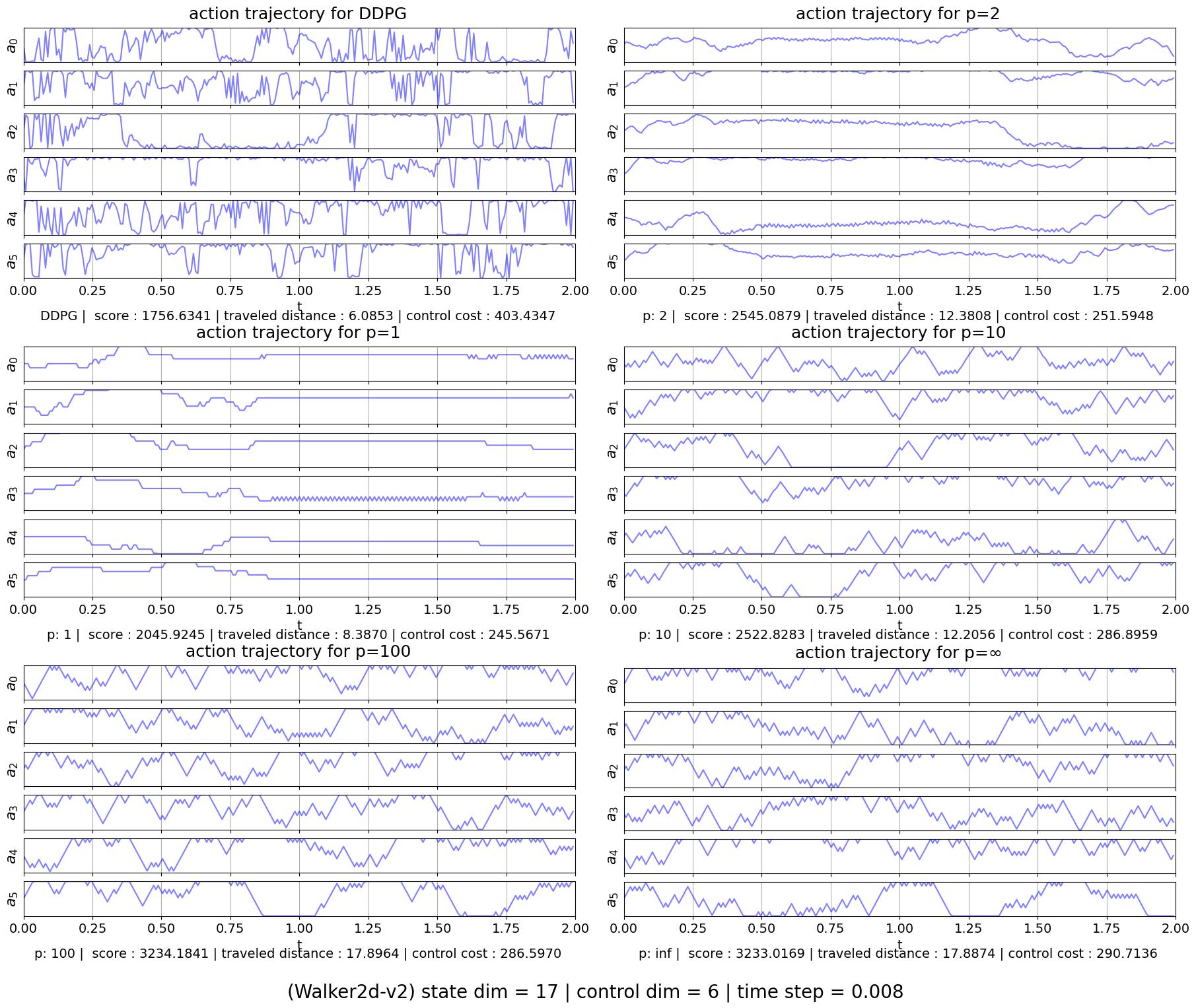}
  \caption{ Action trajectories obtained by different values of $p$ in  HJDQN and DDPG for Walker2d-v2.}
      \label{fig:walker_action}
\end{figure}

\begin{figure}[h!]
  \centering  \includegraphics[width=\textwidth]{./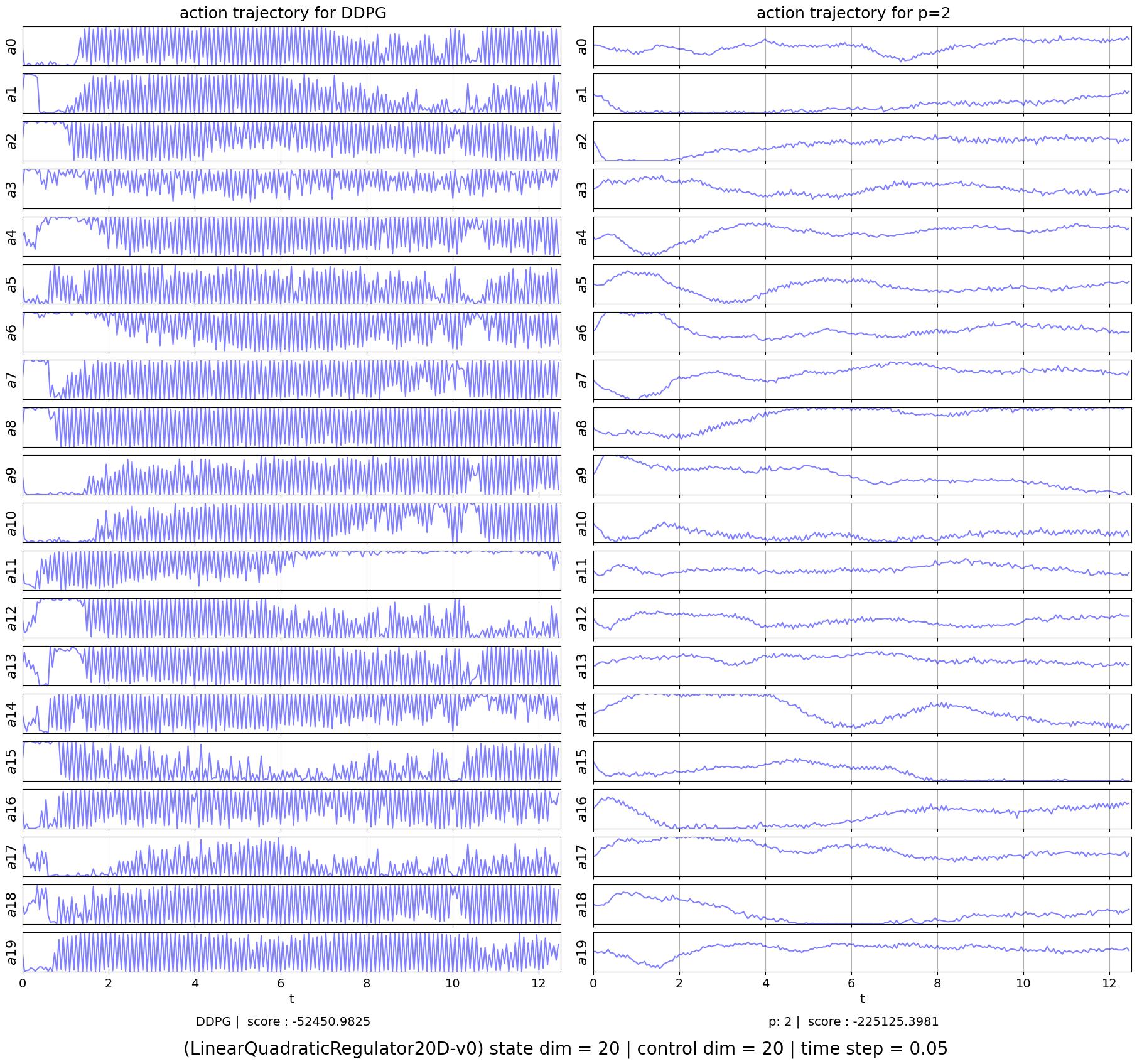}
  \caption{Action trajectories obtained by  HJDQN with $p=2$ and DDPG for 20D LQR problem.}
      \label{fig:lqr-1}
\end{figure}

\begin{figure}[h!]
  \centering  \includegraphics[width=\textwidth]{./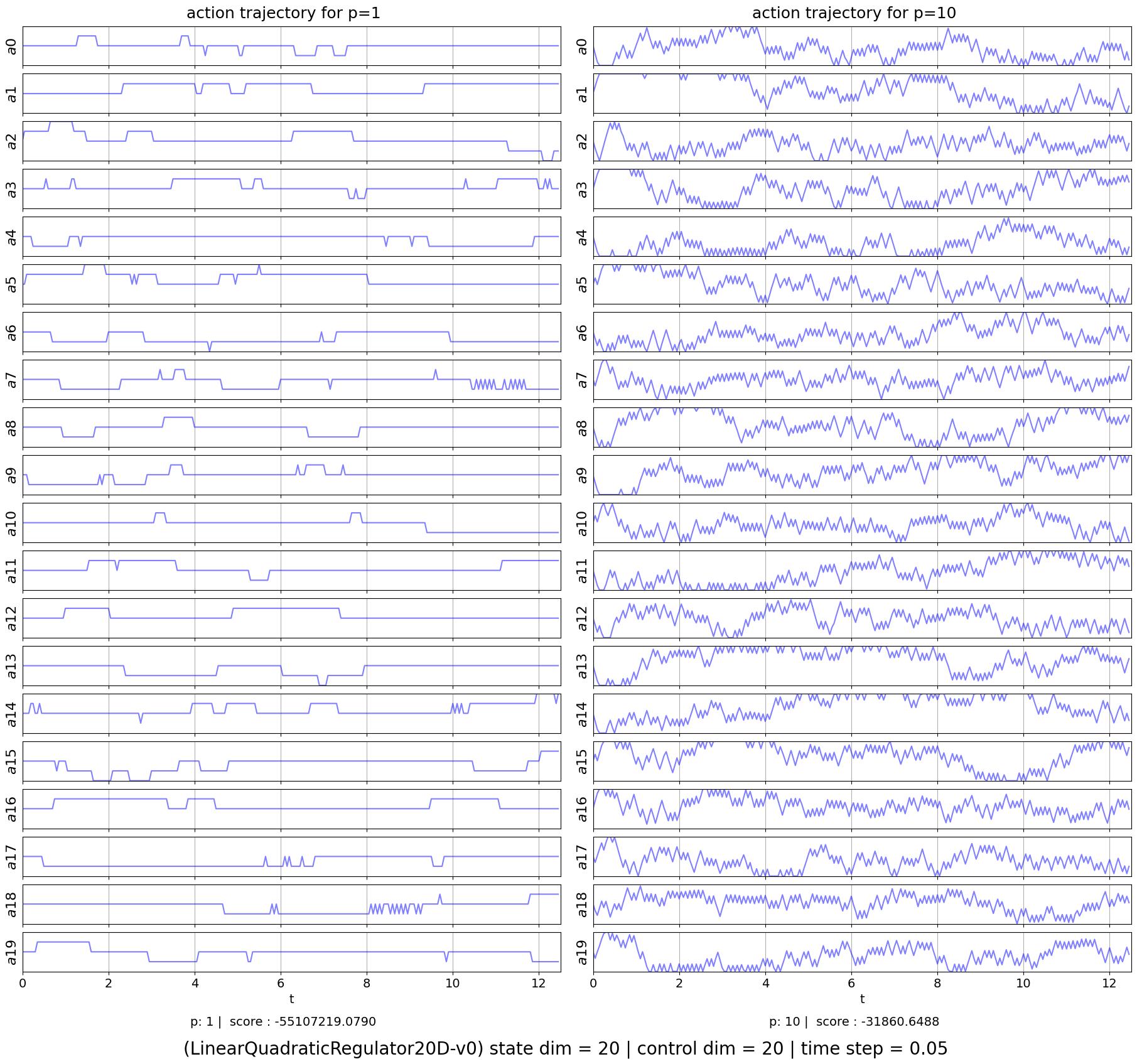}
  \caption{Action trajectories obtained by  HJDQN with $p=1$ and $p=10$ for 20D LQR problem.}
      \label{fig:lqr-2}
\end{figure}

\begin{figure}[h!]
  \centering  \includegraphics[width=\textwidth]{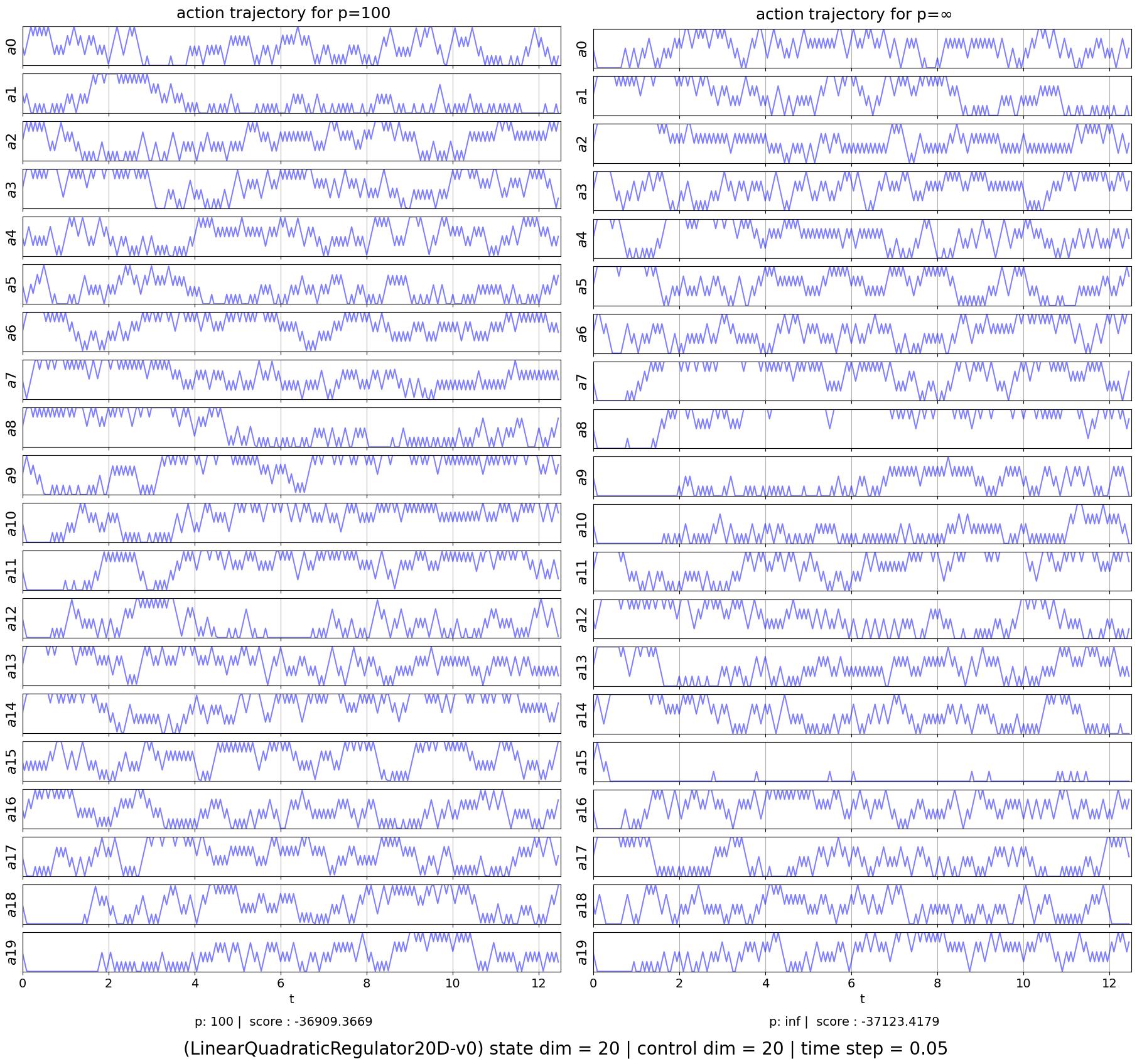}
  \caption{Action trajectories obtained by  HJDQN with $p=100$ and $\infty$ for LQR problem with 20 dimension.}
      \label{fig:lqr-3}
\end{figure}

\end{document}